\newtheorem{theorem}{Theorem}[section]
\newtheorem{lemma}[theorem]{Lemma}
\newtheorem{corollary}[theorem]{Corollary}
\newtheorem{Proposition}[theorem]{Proposition}
\newtheorem*{thma}{Theorem A}
\newtheorem*{coroa}{Corollary A}
\newtheorem*{lemmaa}{Lemma A}
\newtheorem*{lemmab}{Lemma B}
\theoremstyle{remark}
\newtheorem{remark}[theorem]{Remark}
\theoremstyle{definition}
\newtheorem{definition}[theorem]{Definition}
\newtheorem{example}[theorem]{Example}
\numberwithin{equation}{section}
\def\R{{\mathbb R}}
\def\S{{\mathbb S}}
\def\intslash{\rlap{\kern  .32em $\mspace {.5mu}\backslash$ }\int}
\def\qsl{{\rlap{\kern  .32em $\mspace {.5mu}\backslash$ }\int_{Q_x}}}
\def\emph#1{{\it #1 }}
\def\rta{\rightarrow}
\def\pv{\text{\rm p.v.}}
\def\alp{\alpha}
\def\del{\delta}             
\def\eps{\varepsilon}
\def\ep{\epsilon}
\def\tet{\theta}
\def\lam{\lambda}
\def\si{\sigma}              
\def\vphi{\varphi}
\def\om{\omega}              \def\Om{\Omega}
\def\fr{\frac}
\newcommand{\Be}{\begin{equation}}
\newcommand{\Ee}{\end{equation}}
\newcommand{\Bes}{\begin{equation*}}
\newcommand{\Ees}{\end{equation*}}
\newcommand{\Bsp}{\begin{split}}
\newcommand{\Esp}{\end{split}}
\newcommand{\Bm}{\begin{multline}}
\newcommand{\Em}{\end{multline}}
\newcommand{\Bea}{\begin{eqnarray}}
\newcommand{\Eea}{\end{eqnarray}}
\newcommand{\Beas}{\begin{eqnarray*}}
\newcommand{\Eeas}{\end{eqnarray*}}
\newcommand{\Benu}{\begin{enumerate}}
\newcommand{\Eenu}{\end{enumerate}}
\newcommand{\Bi}{\begin{itemize}}
\newcommand{\Ei}{\end{itemize}}
\begin{document}

\title[$L^1$-Dini conditions and limiting behavior of weak type estimates]
{$L^1$-Dini conditions and limiting behavior of
 weak type estimates for singular integrals}

\author[Yong Ding]{Yong Ding}

\author[Xudong Lai]{Xudong Lai}

\address{\textbf{Yong Ding}\endgraf
 School of Mathematical Sciences\endgraf
         Beijing Normal University\endgraf
Laboratory of Mathematics and Complex Systems (BNU), Ministry of Education\endgraf
Beijing, 100875, P. R. of China\endgraf}
\email{dingy@bnu.edu.cn}
\thanks {The work is supported by NSFC (No.11371057, 11471033),  SRFDP (No.20130003110003) and the Fundamental Research Funds for the Central Universities (No.2014KJJCA10).}

\address{\textbf{Xudong Lai}(Corresponding Author)\endgraf
 School of Mathematical Sciences\endgraf
         Beijing Normal University\endgraf
Laboratory of Mathematics and Complex Systems (BNU), Ministry of Education\endgraf
Beijing, 100875, P. R. of China\endgraf}
\email{xudonglai@mail.bnu.edu.cn}
\thanks{Xudong Lai is the corresponding author.}

\subjclass[2010]{42B20}

\keywords{Limiting behavior, weak type estimate, singular integral operator, $L^1$-Dini condition}

\begin{abstract}
In 2006, Janakiraman \cite{PJ} showed that
if $\Omega$  with mean value zero on $\S^{n-1}$ satisfies the condition:
$$ \sup_{|\xi|=1}\int_{\S^{n-1}}|\Omega(\theta)-\Omega(\theta+\del\xi)|d\si(\tet)\leq Cn\del\int_{\mathbb{S}^{n-1}}|\Om(\theta)|d\si(\theta),\quad(\ast)
$$
where $0<\del<\fr{1}{n}$,
then for the singular integral operator $T_\Om$ with homogeneous kernel, the following limiting behavior holds:
\[\lim\limits_{\lam\rta 0_+}\lam m(\{x\in\R^n:|T_\Om f(x)|>\lam\})= \fr{1}{n}\|\Om\|_{1}\|f\|_{1},\quad(\ast\ast)\]
for $f\in L^1(\R^n)$ with $f\geq 0$.
 
In the present paper, we prove that if replacing the condition $(\ast)$ by more general condition, the $L^1$-Dini condition, then
the limiting behavior $(\ast\ast)$ still holds for the singular integral $T_\Om$. In particular,
we give  an example which satisfies the $L^1$-Dini condition, but does not satisfy $(\ast)$.  Hence, we improve essentially the above result given in \cite{PJ}. To prove our conclusion, we show that the $L^1$-Dini conditions defined respectively via rotation and translation in $\R^n$ are equivalent (see Theorem \ref{t:5equ} below),
which has its own interest in the theory of singular integrals. Moreover, similar limiting behavior for the fractional integral operator $T_{\Om,\alpha}$ with homogeneous kernel is also established in this paper.
\end{abstract}

\maketitle

\section{Introduction}

Suppose that the function $\Om$ defined on $\R^n\setminus\{0\}$ satisfies the following conditions:
\Be\label{e:Mo1}
\Om(\lam x)=\Om(x),\quad \text{for any}\ \lam>0\ \text{and}\ x\in\R^n\setminus\{0\},
\Ee
\Be\label{e:Mo2}
\int_{\mathbb{S}^{n-1}}\Om(\tet)d\si(\tet)=0
\Ee
and $\Om\in L^1(\S^{n-1})$, where $\S^{n-1}$ denotes the unit sphere in $\R^n$ and $d\si$ is the area measure on $\S^{n-1}$. Then the singular integral $T_\Om$ with homogenous kernel is defined by
$$T_\Om f(x)=\pv\int\fr{\Om(x-y)}{|x-y|^n}f(y)dy.$$
It is well know that
if $\Om$ is odd and $\Om\in L^1(\S^{n-1})$ (or $\Om$ is even and
$\Om\in L\log^+L(\S^{n-1})$), $T_\Om$ is bounded on $L^p(\R^n)$ for $1<p<\infty$ (see \cite{CZ56}), that is,
\Be\label{e:5TOM}
\|T_\Om f\|_p\leq C_p\|f\|_p.
\Ee
For $p=1$, Seeger \cite{S1} showed
that if $\Om\in L\log^+L(\S^{n-1})$,
\Be\label{e:5WT}
m(\{x\in\R^n:|T_\Om f(x)|>\lam\})\leq C_1\fr{\|f\|_1}{\lam}.
\Ee
If $\Om$ is an odd function, the usual Calder\'on-Zygmund method of rotation gives some information of the constant in \eqref{e:5TOM}. In fact,
 $C_p=\fr{\pi}{2}H_p\|\Om\|_1$ (see \cite{IM}), where $H_p$ denotes the $L^p$ norm of the Hilbert transform ($1<p<\infty$).

In 2004, Janakiraman \cite{PJ_} showed
that the constants $C_p$ in \eqref{e:5TOM} and $C_1$ in \eqref{e:5WT} are at worst $C\log n\|\Om\|_1$ if $\Om$ satisfies \eqref{e:Mo1}, \eqref{e:Mo2} and the following  \emph{regularity condition}:
\begin{equation}\label{e:omega}
\sup_{|\xi|=1}\int_{\S^{n-1}}|\Omega(\theta)-\Omega(\theta+\del\xi)|d\si(\tet)\leq Cn\del\int_{\mathbb{S}^{n-1}}|\Om(\theta)|d\si(\theta),\quad 0<\del<\fr{1}{n},
\end{equation}
where $C$ is a constant independent of the dimension.  In 2006, Janakiraman \cite{PJ} extended further
this result to the limiting case.  Let $\mu$ be a signed measure on $\R^n$, which is absolutely continuous with respect to Lebesgue measure and
$|\mu|(\R^n)<\infty$, here $|\mu|$ is the total variation of $\mu$.
Define
\begin{equation}\label{singular with mea}
T_\Om\mu(x)=\pv\int\fr{\Om(x-y)}{|x-y|^n}d\mu(y).
\end{equation}
\begin{thma}[\cite{PJ}]
Suppose $\Om$ satisfies \eqref{e:Mo1}, \eqref{e:Mo2} and the regularity condition \eqref{e:omega}. Then
$$\lim\limits_{\lam\rta0_+}\lam m(\{x\in\R^n:|T_\Om\mu(x)|>\lam\})=\fr{1}{n}\|\Om\|_1|\mu(\R^n)|.$$
\end{thma}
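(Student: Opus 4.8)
The plan is to reduce the limiting-behavior statement for a general finite absolutely continuous signed measure $\mu$ to the special case $f \in L^1(\R^n)$ with $f \geq 0$ treated (under the stronger hypothesis) by Janakiraman, and then to explain how the regularity condition enters only through the two quantitative consequences it yields. First I would record the scaling/structure of the problem: since $T_\Om$ is homogeneous of degree $-n$, dilations $x \mapsto \lam x$ show that $\lam m(\{|T_\Om\mu|>\lam\})$ is a dilation-invariant functional of $\mu$, so one may normalize $|\mu|(\R^n)=1$; and by splitting $\mu = \mu^+ - \mu^-$ into its Jordan decomposition and noting both parts are absolutely continuous with $L^1$ densities, the Hahn decomposition lets us write $d\mu = g\,dx$ with $g \in L^1$. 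The heart is therefore the case of a general real-valued $g \in L^1$, and the claimed limit is $\tfrac1n\|\Om\|_1 |\int g|$.

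The main mechanism, following the proof of Theorem A in \cite{PJ}, is to compare $T_\Om g$ against the "frozen" kernel $\Om(-x)|x|^{-n}$ acting near a point: one writes $T_\Om g(x)$, for $|x|$ large relative to the support of the bulk of $g$, as $\big(\int g\big)\,\Om(-x)|x|^{-n}$ plus an error term, and the regularity condition \eqref{e:omega} is exactly what controls that error. Quantitatively, one uses that for a mollified/truncated $g$ supported in a ball $B(0,R)$ and for $|x|>2R$, the difference $\big|\,|x-y|^{-n}\Om(x-y) - |x|^{-n}\Om(x)\,\big|$ integrates against $|g|$ to something $o(|x|^{-n})$ as $|x|\to\infty$, with the angular increments of $\Om$ estimated by \eqref{e:omega}. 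Then the distribution function of $(\int g)\Om(-\,\cdot)|\cdot|^{-n}$ is computed exactly: $m(\{x : |\int g|\,|\Om(-x)||x|^{-n} > \lam\}) = (|\int g|/\lam)\int_{\S^{n-1}}|\Om(\theta)|\,\tfrac1n\,d\sigma(\theta) = \tfrac1n\|\Om\|_1 |\int g|/\lam$ by writing the set in polar coordinates as $\{ r^{-n} < |\int g||\Om(\theta)|/\lam \}$ and integrating $r^{n-1}\,dr$ up to $(|\int g||\Om(\theta)|/\lam)^{1/n}$. Combining the exact distribution function with the error estimate, and handling the contribution of $|x|$ small (which is $O(1)$ in measure, hence negligible after multiplying by $\lam \to 0$) and the contribution of $g - g_{\text{trunc}}$ (small in $L^1$, controlled by the weak $(1,1)$ bound \eqref{e:5WT} and a limiting argument in the $L^1$ approximation), yields $\lim_{\lam\to 0_+}\lam m(\{|T_\Om g|>\lam\}) = \tfrac1n\|\Om\|_1|\int g|$.

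The one genuinely new input needed, relative to the positive-$f$ case, is to check that the argument is insensitive to the sign of $g$: the upper bound on the distribution function uses only $|T_\Om g| \le |\int g|\,|\Om(-x)||x|^{-n} + |\text{error}|$ together with subadditivity of measure, and the lower bound uses only that on the region where the error is $o(\lam)$ relative to the main term the inequality $|T_\Om g(x)| \ge |\int g|\,|\Om(-x)||x|^{-n} - |\text{error}|$ still captures almost all of the level set. Neither direction used positivity except through $\|f\|_1 = \int f$; replacing $\int f$ by $|\int g|$ throughout is the only change. I would therefore present the proof as: (i) reduce to $d\mu = g\,dx$, normalize; (ii) approximate $g$ in $L^1$ by a nice compactly supported function and use \eqref{e:5WT} to absorb the tail; (iii) split $\R^n$ into $|x|\lesssim R$ (negligible) and $|x|\gtrsim R$; (iv) on the far region, freeze the kernel and bound the error by \eqref{e:omega}; (v) compute the distribution function of the frozen kernel exactly in polar coordinates; (vi) let $\lam \to 0_+$, then let the $L^1$ approximation improve, to conclude.

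I expect the main obstacle to be step (iv): making the error estimate uniform enough that, after multiplication by $\lam$ and passage to the limit, it truly vanishes — in particular one must be careful that the error is controlled in the \emph{weak} $L^1$ quasi-norm (not just pointwise), since a pointwise $o(|x|^{-n})$ bound alone does not immediately give the needed control on $\lam m(\{|\text{error}| > \eps\lam\})$. The clean way around this is to bound the error operator itself by a (rotated, translated) average of $T_\Om$-type operators to which \eqref{e:5WT} applies, using precisely the translation/rotation equivalence of $L^1$-Dini conditions that is flagged in the abstract as Theorem \ref{t:5equ}; that equivalence is what turns the angular regularity \eqref{e:omega} into a usable bound on the translated kernels appearing in the error term, and it is the technical crux of the whole argument.
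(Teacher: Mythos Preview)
Your outline is essentially the same strategy the paper uses for its Theorem~\ref{t:main} (which subsumes Theorem~A via Proposition~\ref{l:3ker}; Theorem~A itself is only quoted from \cite{PJ}, not reproved). The paper packages the argument through a dilation: rather than sending $\lambda\to 0_+$ directly, it fixes $\lambda$, studies $\mu_t(E)=\mu(E/t)$ as $t\to 0_+$ (Lemma~\ref{l:mu}), and then uses the scaling identity $T_\Om\mu_t(x)=t^{-n}T_\Om\mu(x/t)$ to convert back. Your steps (ii)--(v) match the paper's decomposition $\mu_t=\mu_t^1+\mu_t^2$ into a piece concentrated near the origin and a small remainder, freezing of the kernel at $x$, the exact level-set computation in polar coordinates (your step~(v) is Lemma~\ref{l:omega_est}), and control of the remainder by the weak $(1,1)$ bound.

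Two small corrections. First, the frozen kernel is $\Om(x)/|x|^n$, not $\Om(-x)/|x|^n$: the kernel is $K(x-y)$ and $x-y\approx x$ for $|x|$ large. This is harmless for the conclusion. Second, and more importantly, your anticipated obstacle in step~(iv) is overstated, and the fix you propose (bounding the error operator in weak $L^1$ by averaging $T_\Om$-type operators) is not what is done. The paper simply applies Chebyshev's inequality to the $L^1$-integral of the kernel difference over $B(0,\eta)^c$, uses Fubini, and in polar coordinates obtains the bound $\frac{C}{\delta\lambda}\int_0^{\eps_t/\eta}\tilde\omega_1(s)\,\frac{ds}{s}$, which tends to $0$ as $t\to 0_+$ by the Dini condition. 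The rotation/translation equivalence (Theorem~\ref{t:5equ}) enters only to ensure that the translation modulus $\tilde\omega_1$, which is what naturally appears after polar coordinates, inherits Dini integrability from the rotational one; under hypothesis~\eqref{e:omega} this is even more direct, since \eqref{e:omega} is already a translation-type bound and gives $\tilde\omega_1(\delta)\lesssim \delta$ immediately.
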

As a consequence of Theorem A, Janakiraman showed indeed that
\begin{coroa}[\cite{PJ}]
Let $f\in L^{1}(\R^n)$ and $f\geq0$. Suppose $\Om$ satisfies \eqref{e:Mo1}, \eqref{e:Mo2} and  \eqref{e:omega}, then
\begin{equation}\label{limit}
\lim\limits_{\lam\rta 0_+}\lam m(\{x\in\R^n:|T_\Om f(x)|>\lam\})=\fr{1}{n}\|\Om\|_{1}\|f\|_{1}.
\end{equation}
\end{coroa}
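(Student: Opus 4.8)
The plan is to obtain Corollary A as the special case of Theorem A in which the signed measure $\mu$ has a nonnegative density. Concretely, given $f\in L^1(\R^n)$ with $f\geq0$, I would set $d\mu=f\,dx$. Then $\mu$ is absolutely continuous with respect to Lebesgue measure and its total variation satisfies $|\mu|(\R^n)=\int_{\R^n}|f(x)|\,dx=\|f\|_1<\infty$, so $\mu$ is an admissible measure for \eqref{singular with mea} and for Theorem A. With this choice the principal value integral in \eqref{singular with mea} is literally the one defining $T_\Om f$, i.e.\ $T_\Om\mu(x)=T_\Om f(x)$ for a.e.\ $x$.

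One should record at this point that under the regularity condition \eqref{e:omega} the operator $T_\Om$ is of weak type $(1,1)$ (by Janakiraman's $2004$ bound, the constant $C_1$ in \eqref{e:5WT} is at worst $C\log n\,\|\Om\|_1$). Hence $T_\Om f$ is finite a.e.\ and the distribution function $\lam\mapsto m(\{x\in\R^n:|T_\Om f(x)|>\lam\})$ is well defined and finite for every $\lam>0$, which is what makes the quantity in \eqref{limit} meaningful; the same remark is what legitimises the limit in Theorem A applied to $\mu$.

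The only genuinely substantive point is the evaluation of $|\mu(\R^n)|$, and this is exactly where the hypothesis $f\geq0$ enters: since $f\geq0$ we have $\mu(\R^n)=\int_{\R^n}f(x)\,dx=\|f\|_1\geq0$, so $|\mu(\R^n)|=\|f\|_1$. (For a general $f\in L^1$ one would only get $|\mu(\R^n)|=|\int_{\R^n}f|$, which can be strictly smaller than $\|f\|_1$, and then the right-hand side of \eqref{limit} would change accordingly.) Substituting into Theorem A gives
\[
\lim_{\lam\rta0_+}\lam\, m(\{x\in\R^n:|T_\Om f(x)|>\lam\})
=\lim_{\lam\rta0_+}\lam\, m(\{x\in\R^n:|T_\Om\mu(x)|>\lam\})
=\fr1n\|\Om\|_1\,|\mu(\R^n)|
=\fr1n\|\Om\|_1\,\|f\|_1 ,
\]
which is precisely \eqref{limit}. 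Thus there is no real obstacle here: once Theorem A is in hand, Corollary A is just its specialization to absolutely continuous measures with nonnegative density, the only bookkeeping being the identification $T_\Om\mu=T_\Om f$ and the computation $|\mu(\R^n)|=\|f\|_1$; all the actual work lies in the proof of Theorem A.
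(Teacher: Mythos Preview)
Your proposal is correct and is exactly the approach the paper indicates: Corollary~A is obtained from Theorem~A by taking $d\mu=f\,dx$, so that $T_\Om\mu=T_\Om f$ and, since $f\ge 0$, $|\mu(\R^n)|=\|f\|_1$. The paper does not spell out a proof of Corollary~A beyond ``As a consequence of Theorem~A'' (and, for its own Corollary~1.3, the parenthetical ``By setting $\mu(E)=\int_E f(x)\,dx$''), so your write-up is in fact more detailed than what the paper provides.
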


The limiting behavior \eqref{limit} is very interesting since it gives some information of the best constant for weak type (1,1) estimate of the homogeneous singular integral operator $T_\Om$ in some sense. However, note that the condition \eqref{e:omega} seems to be strong compared with the \emph{H$\ddot{o}$rmander condition} (see also \cite{SWb}):
\begin{equation}\label{Hormand}
\sup\limits_{y\neq0}\int_{|x|>2|y|}|K(x-y)-K(x)|dx<\infty,
\end{equation}
where $K$ is the kernel of the Calder\'on-Zygmund singular integral operator.
Hence, it is natural to ask whether \eqref{limit} still holds if replacing  \eqref{e:omega} by the H$\ddot{o}$rmander condition \eqref{Hormand}?
The purpose of this paper is to give an affirmative answer to the above problem for the case of $K(x)=\Om(x)|x|^{-n}$.

Before stating our results, we give the definition of the $L^1$-Dini condition.

\begin{definition}[$L^1$-Dini condition]\label{d:5L1}
Let $\Om$ satisfy \eqref{e:Mo1}. We say that $\Om$ satisfies the $L^1$-Dini condition if:

(i)\ $\Om\in L^1(\S^{n-1})$;

(ii)\ $\int_0^1\fr{\om_1(\del)}{\del}d\del<\infty$, where $\om_1$ denotes the $L^1$ integral
modulus of continuity of $\Om$ defined by
$$\om_1(\del)=\sup\limits_{\|\rho\|\leq\del}\int_{\S^{n-1}}|\Om(\rho \tet)-\Om(\tet)|d\si(\tet),$$
where $\rho$ is a rotation on $\R^n$ and $\|\rho\|:=\sup\{|\rho x'-x'|:x'\in\S^{n-1}\}$.
\end{definition}

Let us recall two important facts in \cite{CWZ} and \cite{CZ79}.

\begin{lemmaa}[\cite{CWZ}]\label{CWZ}
If $\Om$ satisfies the $L^1$-Dini condition, then $\Om\in L\log^+\!\!L(\S^{n-1})$ and $K(x)=\Om(x)|x|^{-n}$ satisfies the H$\ddot{o}$rmander condition \eqref{Hormand}.
\end{lemmaa}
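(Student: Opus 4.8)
**Proof proposal for Lemma A (the statement that the $L^1$-Dini condition implies $\Om\in L\log^+L(\S^{n-1})$ and that $K(x)=\Om(x)|x|^{-n}$ satisfies the Hörmander condition).**

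The plan is to treat the two conclusions separately, since they rely on the Dini hypothesis in rather different ways. For the assertion $\Om\in L\log^+L(\S^{n-1})$, I would run a dyadic decomposition of $\S^{n-1}$ according to the size of $|\Om|$. Set $E_k=\{\tet\in\S^{n-1}:2^k<|\Om(\tet)|\le 2^{k+1}\}$ for $k\ge 1$; then $\|\Om\|_{L\log^+L}\lesssim \|\Om\|_1+\sum_{k\ge1}k\,2^k\si(E_k)$, so it suffices to control $\sum_k k\,2^k\si(E_k)$. The key geometric point is that a large value of $\Om$ on a set of measure $\si(E_k)$ forces the modulus of continuity $\om_1(\del)$ to be bounded below whenever $\del$ is of the order of the ``diameter'' of a cap supporting a fixed proportion of $E_k$ — more precisely, one shows that for a suitable rotation $\rho$ moving points a distance $\del_k$ with $\del_k$ comparable to a power of $\si(E_k)$, the integral $\int_{\S^{n-1}}|\Om(\rho\tet)-\Om(\tet)|\,d\si$ is $\gtrsim 2^k\si(E_k)$ (roughly: a rotation by that much displaces most of $E_k$ off itself, so the difference picks up the full height $2^k$ on a set comparable to $\si(E_k)$). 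Feeding this into $\int_0^1 \om_1(\del)\del^{-1}\,d\del<\infty$ via the dyadic comparison $\sum_k 2^k\si(E_k)\log(1/\del_k)\lesssim \sum_k \om_1(\del_k)\log(2^{j_{k+1}}/2^{j_k})\lesssim \int_0^1\om_1(\del)\del^{-1}d\del$ produces exactly the $L\log^+L$ bound; the logarithmic gain $\log(1/\del_k)\sim k$ is what converts the $L^1$-Dini integral into the $k\,2^k\si(E_k)$ sum.

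For the Hörmander condition, the natural approach is to split the integral $\int_{|x|>2|y|}|K(x-y)-K(x)|\,dx$ into the ``smooth part'' coming from the radial factor $|x|^{-n}$ and the ``rough part'' coming from $\Om$. Writing $K(x-y)-K(x)=\Om(x-y)\big(|x-y|^{-n}-|x|^{-n}\big)+|x|^{-n}\big(\Om(x-y)-\Om(x)\big)$, the first term is handled by the standard mean-value estimate $\big||x-y|^{-n}-|x|^{-n}\big|\lesssim |y|\,|x|^{-n-1}$ together with $\Om\in L^1(\S^{n-1})$, integrating in polar coordinates and over $|x|>2|y|$ to get a bound independent of $y$. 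For the second term I would pass to polar coordinates $x=r\tet'$, $r>2|y|$, and observe that $\Om(x-y)=\Om(\tet'_y)$ where $\tet'_y = (x-y)/|x-y|$ is obtained from $\tet'$ by a rotation of angle $\sim |y|/r$; hence $\int_{\S^{n-1}}|\Om(\tet'_y)-\Om(\tet')|\,d\si(\tet')\le \om_1(C|y|/r)$ (up to an absolute distortion constant and the Jacobian of the map $\tet'\mapsto \tet'_y$, which is $1+O(|y|/r)$). Then
\[
\int_{|x|>2|y|}\frac{|\Om(x-y)-\Om(x)|}{|x|^n}\,dx
\lesssim \int_{2|y|}^\infty \om_1\!\Big(\frac{C|y|}{r}\Big)\frac{dr}{r}
= \int_0^{C/2}\frac{\om_1(\del)}{\del}\,d\del<\infty
\]
after the substitution $\del=C|y|/r$, which is finite by hypothesis and independent of $y$. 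Summing the two contributions gives \eqref{Hormand}.

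The main obstacle I anticipate is the first conclusion — the deduction of $L\log^+L$ — because it requires quantifying, uniformly in $k$, how a large level set $E_k$ forces a lower bound on $\om_1$ at a scale $\del_k$ tied to $\si(E_k)$; one must choose the rotation and the scale carefully so that the displaced copy $\rho E_k$ is essentially disjoint from $E_k$ (or at least that $|\Om\circ\rho - \Om|\gtrsim 2^k$ on a set of measure $\gtrsim \si(E_k)$), and this is a genuinely geometric covering argument on $\S^{n-1}$ rather than a routine estimate. The Hörmander part, by contrast, is the standard Calderón–Zygmund splitting and should go through cleanly once the Jacobian distortion of the rotation $\tet'\mapsto(x-y)/|x-y|$ is bookkept correctly.
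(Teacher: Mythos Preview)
The paper does not prove Lemma~A at all: it is quoted as a result of Calder\'on, Weiss and Zygmund \cite{CWZ} and used as a black box. So there is no ``paper's proof'' to compare your proposal against, and what follows is a comment on the substance of your outline.

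Your treatment of the H\"ormander condition is the standard one and is essentially correct. One small point: the map $\theta'\mapsto (r\theta'-y)/|r\theta'-y|$ is not a rotation of $\S^{n-1}$, so the bound you want is really $\tilde\omega_1(|y|/r)$ (the translation modulus in the paper's Definition~\ref{d:5L11}); passing from $\tilde\omega_1$ to $\omega_1$ requires exactly Lemma~\ref{l:3omesti} (equivalently Theorem~\ref{t:5equ}). With that bookkeeping the argument goes through cleanly.

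Your outline for $\Omega\in L\log^+L$ has a genuine gap. Two issues: (i) the claim that a single rotation of size $\delta_k\sim\sigma(E_k)^{1/(n-1)}$ ``displaces most of $E_k$ off itself'' can fail for sets with complicated geometry (think of $E_k$ a thin tubular neighbourhood of a great circle, nearly invariant under the obvious rotations); and (ii) even granting a bound $\omega_1(\delta_k)\gtrsim 2^k\sigma(E_k)$ at one scale per $k$, your summation step does not recover the needed factor of $k$ --- the displayed chain ``$\sum_k 2^k\sigma(E_k)\log(1/\delta_k)\lesssim\sum_k\omega_1(\delta_k)\log(2^{j_{k+1}}/2^{j_k})$'' is not justified, because the intervals $[\delta_{k+1},\delta_k]$ have length $\log(\delta_k/\delta_{k+1})$ unrelated to $\log(1/\delta_k)$.

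The fix is to replace the single-rotation displacement by an averaging argument and to tie $\delta$ to the \emph{level}, not to $\sigma(E_k)$. Let $\phi_\delta$ be an $L^1$-normalised bump of scale $\delta$ on the sphere; then $\|\Omega*\phi_\delta\|_\infty\lesssim\delta^{-(n-1)}\|\Omega\|_1$ and $\|\Omega-\Omega*\phi_\delta\|_1\lesssim\omega_1(\delta)$. Hence on the set $\{|\Omega|>C\delta^{-(n-1)}\}$ one has $|\Omega-\Omega*\phi_\delta|\geq|\Omega|/2$, which gives the pointwise-in-$\delta$ lower bound
\[
\omega_1(\delta)\ \gtrsim\ \int_{\{|\Omega|>C\delta^{-(n-1)}\}}|\Omega|\,d\sigma\qquad\text{for all small }\delta.
\]
Integrating this against $d\delta/\delta$ over $(0,1)$ and switching the order of integration yields directly
\[
\int_0^1\frac{\omega_1(\delta)}{\delta}\,d\delta\ \gtrsim\ \frac{1}{n-1}\int_{\S^{n-1}}|\Omega|\log^+|\Omega|\,d\sigma,
\]
which is the desired conclusion. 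In dyadic language, the point is that $\omega_1(2^{-j})\gtrsim\sum_{k\geq(n-1)j}2^k\sigma(E_k)$ for \emph{every} $j$, and summing over $j$ produces the factor $k\sim(n-1)j$ that your version was missing.
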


\begin{lemmab}[\cite{CZ79}]\label{CZ}
If $K(x)=\Om(x)|x|^{-n}$ satisfies the H$\ddot{o}$rmander condition \eqref{Hormand}, then $\Om\in L\log^+\!\!L(\S^{n-1})$ and $\Om$ satisfies the $L^1$-Dini condition.
\end{lemmab}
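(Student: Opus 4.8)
The plan is to deduce the $L^1$-Dini condition directly from the H\"ormander condition \eqref{Hormand}; the inclusion $\Om\in L\log^+L(\S^{n-1})$ then comes for free from Lemma A. First I would use the homogeneity of $K(x)=\Om(x)|x|^{-n}$: substituting $x=|y|z$ and $\eta=y/|y|$ in \eqref{Hormand} shows that \eqref{Hormand} is equivalent to $A:=\sup_{|\eta|=1}\int_{|z|>2}|K(z-\eta)-K(z)|\,dz<\infty$. Passing to polar coordinates $z=r\tet$ (with $r=|z|>2$, $\tet\in\S^{n-1}$), using $K(z)=r^{-n}\Om(\tet)$ and the $0$-homogeneity of $w\mapsto w/|w|$, I split
\[
K(z-\eta)-K(z)=\Big(\frac{1}{|z-\eta|^{n}}-\frac{1}{r^{n}}\Big)\Om\Big(\frac{z-\eta}{|z-\eta|}\Big)+\frac{1}{r^{n}}\Big(\Om\Big(\frac{\tet-\eta/r}{|\tet-\eta/r|}\Big)-\Om(\tet)\Big).
\]
For $r>2$ one has $\big||z-\eta|^{-n}-r^{-n}\big|\le C_n\,r^{-n-1}$, so after the change of variable $w=z-\eta$ and passing to polar coordinates the first (radial) term contributes at most $C_n\|\Om\|_{L^1(\S^{n-1})}\int_1^\infty\rho^{-2}\,d\rho\le C_n\|\Om\|_{L^1(\S^{n-1})}<\infty$ (recall $\Om\in L^1(\S^{n-1})$ is a standing hypothesis). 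Integrating the second (angular) term in polar coordinates and then substituting $s=1/r$ yields, for every unit vector $\eta$,
\[
\int_0^{1/2}\frac{1}{s}\int_{\S^{n-1}}\Big|\Om\Big(\frac{\tet-s\eta}{|\tet-s\eta|}\Big)-\Om(\tet)\Big|\,d\si(\tet)\,ds\le C_n\big(A+\|\Om\|_{L^1(\S^{n-1})}\big),
\]
with a bound independent of $\eta$; i.e., $\Om$ satisfies an estimate of $L^1$-Dini type stated through translations of $\R^n$ rather than rotations.

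It then remains to convert this translation-form smoothness into the $L^1$-Dini condition of Definition \ref{d:5L1}, which is phrased through rotations $\rho$ and the quantity $\|\rho\|$. This is precisely what Theorem \ref{t:5equ} provides: the equivalence of the $L^1$-Dini conditions defined via translations and via rotations. Applying it, the displayed inequality forces $\int_0^1\om_1(\del)\del^{-1}\,d\del<\infty$, so $\Om$ satisfies the $L^1$-Dini condition; Lemma A then gives $\Om\in L\log^+L(\S^{n-1})$, which completes the proof. (In the absence of Theorem \ref{t:5equ} one can argue directly, writing a rotation $\rho$ with $\|\rho\|\le\del$ as a bounded product of planar rotations of size $\le\del$ and comparing each such rotation, in the $L^1(\si)$-averaged sense, with a projection-type map $\tet\mapsto(\tet-s\eta)/|\tet-s\eta|$, the discrepancy being $O(\del^{2})$.)

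I expect the real difficulty to lie in this last conversion — equivalently, in the proof of Theorem \ref{t:5equ}. The maps $\tet\mapsto(\tet-s\eta)/|\tet-s\eta|$ are not rotations and do not compose to maps of the same type, while the H\"ormander hypothesis furnishes only one radially averaged estimate per direction $\eta$; extracting from this the supremum over all rotations $\rho$ with $\|\rho\|\le\del$ that defines $\om_1(\del)$ — in particular, upgrading per-direction control to control uniform over all directions — is the crux, and presumably the reason the equivalence of the two $L^1$-Dini conditions is singled out as a statement of independent interest.
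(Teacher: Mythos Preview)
The paper does not prove Lemma~B at all: it is quoted from Calder\'on--Zygmund \cite{CZ79} and used only to record that, for $K(x)=\Om(x)|x|^{-n}$, the H\"ormander condition and the $L^1$-Dini condition coincide. So there is no in-paper argument to compare against; your sketch has to stand on its own.

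Your reduction is fine up to the displayed inequality: homogeneity collapses \eqref{Hormand} to unit scale, the radial/angular split is the right one, and the radial term is $O(\|\Om\|_1)$. What you end up with is
\[
\sup_{|\eta|=1}\int_0^{1/2}\frac{1}{s}\int_{\S^{n-1}}\bigl|\Om(\tet-s\eta)-\Om(\tet)\bigr|\,d\si(\tet)\,ds\;\le\;C,
\]
a \emph{per-direction} integrated bound with the supremum over $\eta$ sitting \emph{outside} the $s$-integral. The gap is the next sentence. Theorem~\ref{t:5equ} asserts the equivalence of $\int_0^1\om_1(\del)\del^{-1}d\del<\infty$ and $\int_0^1\tilde\om_1(\del)\del^{-1}d\del<\infty$; in \emph{both} of these the supremum (over rotations, resp.\ over $|h|\le\del$) lives \emph{inside} the $\del$-integral. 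Your displayed estimate is neither of them: to match Definition~\ref{d:5L11} you would need $\int_0^{1/2}\tilde\om_1(s)s^{-1}ds<\infty$, i.e.\ $\int_0^{1/2}\bigl(\sup_{|h|\le s}F(h)\bigr)s^{-1}ds<\infty$ with $F(h)=\int_{\S^{n-1}}|\Om(\tet+h)-\Om(\tet)|d\si(\tet)$, and passing from $\sup_\eta\int$ to $\int\sup_\eta$ is precisely what is missing. Nothing in the paper --- not Theorem~\ref{t:5equ}, not \eqref{e:3om}, not Lemma~\ref{l:3omesti} --- performs that interchange.

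Your closing paragraph shows you sense a difficulty, but you locate it in the rotation-versus-translation dictionary (Theorem~\ref{t:5equ}) rather than in the $\sup$/$\int$ interchange. The parenthetical alternative (factor $\rho$ into planar rotations, compare each with a projection map up to $O(\del^2)$) runs into the same wall: for each planar factor you would again only possess a scale-integrated estimate in a single direction $\eta$, and summing a bounded number of such estimates still does not control $\om_1(\del)$ pointwise in $\del$. Bridging this gap is exactly the substance of the argument in \cite{CZ79}; it requires an additional idea (for instance, an averaging over the rotation group together with a doubling/subadditivity step that lets a supremum be recovered from an average) that is absent from your outline.
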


By Lemma A and Lemma B, one can see immediately that
for the kernel $K(x)=\Om(x)|x|^{-n}$ the H$\ddot{o}$rmander condition \eqref{Hormand} is equivalent to the $L^1$-Dini condition.

In Section \ref{s:52}, we will prove that the regularity condition \eqref{e:omega} is stronger than the $L^1$-Dini condition (see Proposition  \ref{l:3ker}).
Also we will give an example to show that the $L^1$-Dini condition is
rigorously weaker than the regularity condition \eqref{e:omega}
 (see Example \ref{exa:3}).

Our main result in this paper is to prove that the limiting behavior \eqref{limit} still holds if replacing the condition \eqref{e:omega}
by the $L^1$-Dini condition.

\begin{theorem}\label{t:main}
Suppose $\Om$ satisfies \eqref{e:Mo1}, \eqref{e:Mo2} and the $L^1$-Dini condition. Let $\mu$ be an absolutely continuous signed measure
 on $\R^n$ with respect to Lebesgue measure and $|\mu|(\R^n)<\infty$.
 Let $T_\Omega$ be defined by \eqref{singular with mea}.
 Then we have
\Be\label{e:main}
\lim\limits_{\lam\rta 0_+}\lam m(\{x\in\R^n:|T_\Om\mu(x)|>\lam\})= \fr{1}{n}\|\Om\|_{1}|\mu(\R^n)|.
\Ee

\end{theorem}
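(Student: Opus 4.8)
The plan is to follow the strategy of Janakiraman but to replace the pointwise use of the regularity condition \eqref{e:omega} by a Dini-type quantitative control. First I would reduce the problem to the case where $\mu = f\,dx$ with $f \geq 0$, $f \in C_0^\infty(\R^n)$, and $\int f = 1$; the general signed measure case follows by splitting into positive and negative parts, using absolute continuity, and a density/approximation argument together with the weak type $(1,1)$ bound \eqref{e:5WT} (available here since the $L^1$-Dini condition implies $\Om \in L\log^+L(\S^{n-1})$ by Lemma A). For such $f$, the heuristic is that as $\lam \to 0_+$ the level set $\{|T_\Om f| > \lam\}$ is governed by the behavior of $T_\Om f$ at infinity, where $T_\Om f(x) \approx |x|^{-n}\int \Om(x-y)f(y)\,dy \approx \Om(x/|x|)|x|^{-n}$. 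One then computes directly that $m(\{x : |\Om(x/|x|)|\,|x|^{-n} > \lam\}) = \lam^{-1}\int_{\S^{n-1}}|\Om(\theta)|\,d\sigma(\theta)/n = \lam^{-1}\|\Om\|_1/n$, which produces the claimed constant.

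The key step is to make this asymptotic rigorous: write $T_\Om f(x) = \Om(x/|x|)|x|^{-n}\cdot(\int f) + E(x)$ where $E$ is an error term, and show that the error does not contribute to the limit, i.e. $\lim_{\lam\to 0_+}\lam\, m(\{x : |E(x)| > \eps\lam\}) = 0$ for every $\eps > 0$, while simultaneously controlling the overlap between the main term and the error on the relevant scales. For the region $|x|$ large, one expands $\Om(x-y)|x-y|^{-n} - \Om(x/|x|)|x|^{-n}$ using homogeneity: the difference of the radial factors $|x-y|^{-n} - |x|^{-n}$ is $O(|y|\,|x|^{-n-1})$, which is integrable against $f$ and contributes a term that is $o(|x|^{-n})$, hence negligible at the level of the limit; the difference of the angular factors is $|\Om(\frac{x-y}{|x-y|}) - \Om(\frac{x}{|x|})|$, and here the rotation taking $x/|x|$ to $(x-y)/|x-y|$ has norm $O(|y|/|x|)$, so after integrating in $x$ over a dyadic shell $|x| \sim R$ one gets a bound involving $\om_1(C|y|/R)$. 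Summing the dyadic contributions and using the Dini integrability $\int_0^1 \om_1(\del)/\del\, d\del < \infty$ shows the angular error is also negligible in the limiting sense. For $|x|$ bounded, $T_\Om f \in L^1_{loc}$ (even $L^p$ by \eqref{e:5TOM} when applicable, or directly) and contributes nothing to $\lim \lam\, m(\cdots)$ since a set of finite measure contributes $O(\lam) \to 0$.

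The main obstacle I anticipate is precisely the angular error estimate: controlling $\int_{|x|\sim R}\int |\Om(\tfrac{x-y}{|x-y|}) - \Om(\tfrac{x}{|x|})|\, f(y)\, dy\, dx$ requires translating the Euclidean perturbation $x \mapsto x-y$ into a rotation of the sphere and bounding things by the \emph{rotational} modulus $\om_1$. This is where the equivalence of the rotation-defined and translation-defined $L^1$-Dini conditions (Theorem \ref{t:5equ}, which the paper isolates precisely for this reason) does the heavy lifting — it lets one pass freely between $|\Om(\theta) - \Om(\theta + \del\xi)|$-type differences (natural from the translation $y$) and the rotational modulus appearing in Definition \ref{d:5L1}. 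A secondary technical point is the uniformity of all estimates as $\lam \to 0_+$: one must choose the cutoff between "near" and "far" (say $|x| \leq R_\lam$ with $R_\lam = \lam^{-1/(2n)}$ or similar) depending on $\lam$ so that both the bounded-region contribution and the tail error vanish after multiplication by $\lam$; arranging the two requirements to be simultaneously satisfiable is a balancing act but should go through once the Dini sum is in hand.
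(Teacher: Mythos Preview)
Your core far-field analysis is correct and matches the paper's almost step for step: the decomposition of the kernel difference into an angular piece $|\Omega(x-y)-\Omega(x)|/|x-y|^n$ and a radial piece $|\Omega(x)|\bigl||x-y|^{-n}-|x|^{-n}\bigr|$, the Chebyshev/Fubini/polar-coordinate estimate producing $\int_0^{\cdot}\tilde\omega_1(s)/s\,ds$, and the appeal to Theorem~\ref{t:5equ} to pass from the translation modulus $\tilde\omega_1$ to the rotational one are exactly the bounds the paper proves for its error sets $G_{t,1}$ and $G_{t,2}$. The organizational difference is that the paper does \emph{not} reduce to compactly supported $f$ and a $\lambda$-dependent cutoff $R_\lambda$. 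Instead it uses a scaling trick: setting $\mu_t(E)=\mu(E/t)$ and the dilation identity $T_\Omega\mu_t(x)=t^{-n}T_\Omega\mu(x/t)$ converts $\lambda\to 0_+$ into $t\to 0_+$ at a \emph{fixed} level $\lambda$. As $t\to 0_+$ the measure $\mu_t$ concentrates near the origin, so one truncates $\mu_t$ to a shrinking ball $B(0,\eps_t)$ (the remainder has total variation $\leq\eps$, handled by weak $(1,1)$) and compares $T_\Omega\mu_t^1$ with $\Omega(x)|x|^{-n}\mu_t^1(\R^n)$ on a \emph{fixed} exterior $\{|x|>\eta\}$. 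This buys them a proof that works directly for the general measure, with no density step; your route trades that for the more elementary picture of a fixed compactly supported $f$ and a moving cutoff $R_\lambda$.

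One genuine caution: your proposed reduction to $f\geq 0$ via ``splitting into positive and negative parts'' does not work. Knowing the limit for $\mu^+$ and $\mu^-$ separately yields $\tfrac{1}{n}\|\Omega\|_1\,\mu^\pm(\R^n)$, but the level sets of $|T_\Omega\mu|=|T_\Omega\mu^+-T_\Omega\mu^-|$ cannot be recovered from those of $|T_\Omega\mu^\pm|$; the constant in \eqref{e:main} is $|\mu(\R^n)|$, not $|\mu|(\R^n)$, so the cancellation between $\mu^+$ and $\mu^-$ is essential and must survive to the end. The fix is painless: your expansion $T_\Omega f(x)=\Omega(x)|x|^{-n}\!\int f + E(x)$ and all the error bounds you describe nowhere use $f\geq 0$, so run the core argument for arbitrary real-valued $f\in C_0^\infty$ (or just compactly supported $f\in L^1$) and then pass to general absolutely continuous $\mu$ by $L^1$-density plus weak $(1,1)$. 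The paper's scaling device simply sidesteps this reduction altogether.
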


By setting $\mu(E)=\int_{E}f(x)dx$ with $f\in L^1(\R^n)$ in Theorem \ref{t:main}, we have the following result.
\begin{corollary}

Let $f\in L^{1}(\R^n)$ and $f\geq0$. Suppose $\Om$ satisfies \eqref{e:Mo1}, \eqref{e:Mo2} and the $L^1$-Dini condition. Then we have
\Bes
\lim\limits_{\lam\rta 0_+}\lam m(\{x\in\R^n:|T_\Om f(x)|>\lam\})= \fr{1}{n}\|\Om\|_{1}\|f\|_{1}.
\Ees
\end{corollary}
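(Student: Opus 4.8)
Since the Corollary is the case $d\mu=f\,dx$ of Theorem \ref{t:main} --- then $\mu$ is absolutely continuous and, as $f\ge0$, $|\mu(\R^n)|=\int_{\R^n}f=\|f\|_1$ --- it is enough to sketch a proof of Theorem \ref{t:main}. The guiding idea is that for $|x|$ large the operator effectively sees the whole mass of $\mu$ at the origin, so $T_\Om\mu(x)$ is close to $h(x):=\Om(x)|x|^{-n}\mu(\R^n)$, and that the entire weak-type limit is produced by $h$. The elementary computation for $h$ is the source of the constant $\fr1n$: in polar coordinates $x=r\tet$ one has $|h(r\tet)|>\lam$ exactly when $r^n<|\Om(\tet)|\,|\mu(\R^n)|/\lam$, so
\Bes
m\big(\{x\in\R^n:|h(x)|>\lam\}\big)=\int_{\S^{n-1}}\fr{|\Om(\tet)|\,|\mu(\R^n)|}{n\lam}\,d\si(\tet)=\fr{\|\Om\|_1|\mu(\R^n)|}{n\lam},
\Ees
i.e.\ $\lam\,m(\{|h|>\lam\})=\fr1n\|\Om\|_1|\mu(\R^n)|$ for all $\lam>0$. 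Combined with the trivial inclusions $\{|T_\Om\mu|>\lam\}\subseteq\{|h|>(1-\eps)\lam\}\cup\{|T_\Om\mu-h|>\eps\lam\}$ and $\{|h|>(1+\eps)\lam\}\subseteq\{|T_\Om\mu|>\lam\}\cup\{|T_\Om\mu-h|>\eps\lam\}$, this reduces \eqref{e:main} to the single assertion that, for every $\eps>0$,
\Bes
\lam\,m\big(\{x\in\R^n:|T_\Om\mu(x)-h(x)|>\eps\lam\}\big)\rta0\qquad(\lam\rta0_+);
\Ees
indeed the inclusions then squeeze $\lam\,m(\{|T_\Om\mu|>\lam\})$ between $\fr{1}{1+\eps}\fr1n\|\Om\|_1|\mu(\R^n)|$ and $\fr{1}{1-\eps}\fr1n\|\Om\|_1|\mu(\R^n)|$ in the limit, and one lets $\eps\rta0$.

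Write $\mu=f\,dx$ with $f\in L^1(\R^n)$, so $T_\Om\mu=T_\Om f$ and $\mu(\R^n)=\int_{\R^n}f$. By Lemma A, $\Om\in L\log^+\!L(\S^{n-1})$, so $T_\Om$ obeys Seeger's weak-type bound \eqref{e:5WT}, and $g\mapsto\Om(\cdot)|\cdot|^{-n}\int_{\R^n}g$ obeys the same bound by the polar computation above. An $L^1$-density argument using these two estimates reduces matters to the case in which $f$ is bounded with $\supp f\subseteq\{|y|\le R\}$ for some $R>0$. For such $f$ and $|x|>2R$ the small-scale truncation in the principal value $T_\Om f(x)$ is vacuous and the integral is absolutely convergent, so $T_\Om f(x)=\int_{\R^n}K(x-y)f(y)\,dy$ with $K(x)=\Om(x)|x|^{-n}$, whence, writing $E(x):=T_\Om f(x)-h(x)$,
\Bes
E(x)=\int_{\R^n}\big[K(x-y)-K(x)\big]f(y)\,dy\qquad(|x|>2R).
\Ees
By Tonelli, and since $\{|x|>M\}\subseteq\{|x|>2|y|\}$ whenever $|y|\le R\le M/2$,
\Bes
\int_{|x|>M}|E(x)|\,dx\le\int_{\R^n}|f(y)|\Big(\int_{|x|>M}|K(x-y)-K(x)|\,dx\Big)dy .
\Ees
Lemma A enters again here: $K$ satisfies the H\"ormander condition \eqref{Hormand}, so the inner integral is bounded uniformly in $y$ and, for each fixed $y$, is the tail of a convergent integral and hence tends to $0$ as $M\rta\infty$; dominated convergence against $|f|\,dy$ then gives $\int_{|x|>M}|E(x)|\,dx\rta0$ as $M\rta\infty$.

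Finally one deduces $\lam\,m(\{|E|>\eps\lam\})\rta0$. Given $\eps,\eta>0$, choose $M\ge2R$ with $\int_{|x|>M}|E|\,dx<\eta$; then Chebyshev gives $\lam\,m(\{x:|x|>M,\ |E(x)|>\eps\lam\})\le\eta/\eps$ for all $\lam>0$, while on the bounded set $\lam\,m(\{x:|x|\le M,\ |E(x)|>\eps\lam\})\le c_n\lam M^n\rta0$ as $\lam\rta0_+$, so $\limsup_{\lam\rta0_+}\lam\,m(\{|E|>\eps\lam\})\le\eta/\eps$, and $\eta$ being arbitrary the left side is $0$. This completes the reduction and therefore Theorem \ref{t:main} and the Corollary. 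The crux is the tail bound $\int_{|x|>M}|E|\rta0$ --- turning the heuristic $T_\Om\mu(x)\approx\Om(x)|x|^{-n}\mu(\R^n)$ into a genuine vanishing $L^1$-tail --- and it is the only point where any regularity of $\Om$ is used, entering solely through the equivalence (Lemma A, from \cite{CWZ}) of the $L^1$-Dini condition with the H\"ormander condition on $K$; thus replacing the strong condition \eqref{e:omega} of Theorem A by the weaker $L^1$-Dini hypothesis costs nothing more than extracting this estimate from \eqref{Hormand} instead of from the much more quantitative \eqref{e:omega}.
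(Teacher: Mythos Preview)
Your argument is correct, and it takes a genuinely different route from the paper's. The paper does not work directly with the limit $\lam\rta0_+$; instead it introduces the dilation $\mu_t(E)=\mu(E/t)$, observes that $T_\Om\mu_t(x)=t^{-n}T_\Om\mu(x/t)$, and thereby converts the problem into showing (its key Lemma~\ref{l:mu}) that $\lam\,m(\{|T_\Om\mu_t|>\lam\})\to\fr1n\|\Om\|_1|\mu(\R^n)|$ as $t\rta0_+$ for each fixed $\lam$. That lemma is proved by a fairly elaborate decomposition: $\mu_t$ is split into a piece $\mu_t^1$ supported in a shrinking ball and a remainder handled by weak $(1,1)$, and on an exterior region $B(0,\eta)^c$ the difference between $T_\Om\mu_t^1(x)$ and $\fr{\Om(x)}{|x|^n}\mu_t^1(\R^n)$ is controlled via sets $G_{t,1},G_{t,2}$. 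The estimate of $m(G_{t,1})$ is where the paper \emph{needs} its Theorem~\ref{t:5equ} (equivalence of the rotation- and translation-based $L^1$-Dini conditions), in order to access $\tilde\om_1$.

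Your approach sidesteps all of this: you compare $T_\Om\mu$ with $h(x)=\Om(x)|x|^{-n}\mu(\R^n)$ directly, reduce by $L^1$-density to compactly supported $f$, and then kill the tail $\int_{|x|>M}|T_\Om f-h|$ using only the H\"ormander condition on $K$, which follows from the $L^1$-Dini hypothesis via Lemma~A. In particular your proof shows that Theorem~\ref{t:5equ} is \emph{not} needed for Theorem~\ref{t:main}; Lemma~A alone suffices. (A minor remark: invoking Seeger via $L\log^+L$ is a detour --- the H\"ormander condition you already have gives weak $(1,1)$ directly.) What the paper's longer route buys is the intermediate Lemma~\ref{l:mu}, which has some independent interest, and a template that transfers verbatim to the fractional operator $T_{\Om,\alp}$ in Theorem~\ref{t:2}; your argument would need the analogous $(L^1,L^{r,\infty})$ bound and a fractional-H\"ormander tail estimate to do the same.
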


The next results are related to the limiting behavior for  weak type estimate of the homogenous fractional integral operator
 $T_{\Om,\alp}$, which is defined as
$$T_{\Om,\alp}f(x)=\int\fr{\Om(x-y)}{|x-y|^{n-\alp}}f(y)dy,\quad 0<\alpha<n.$$
It is well known that the fractional integral operator $T_{\Om,\alp}$,
a generalization of Riesz potential, has been studied by
many people (see the book \cite{LDY} and the references therein).
In \cite{DL}, while studying the boundedness of $T_{\Om,\alp}$ on Hardy space, Ding and Lu introduced the following regularity condition of $\Om$:
\Be\label{e:5frac}
\int_0^1\fr{\om_q(\del)}{\del^{1+\alp}}d\del<\infty,
\Ee
where $\om_q$ denotes the $L^q$ integral
modulus of continuity of $\Om$.

To study the limiting behavior of the fractional operator with homogeneous kernel, we need some regularity conditions similar to \eqref{e:5frac}. For convenience, we give the following notation.
\begin{definition}[$L^{s}_{\alp}$-Dini condition]\label{d:5Lalp}
 Let $\Om$ satisfy \eqref{e:Mo1}, $1\le s\le\infty$ and $0<\alp<n$. We say that $\Om$ satisfies the $L^{s}_{\alp}$-Dini condition if

(i)\ $\Om\in L^{s}(\S^{n-1})$;

(ii)\ $\int_0^1\fr{\om_1(\del)}{\del^{1+\alp}}d\del<\infty$, where
$\om_1$ is defined as that in Definition \ref{d:5L1}.

\end{definition}

Let $\nu$ be an absolutely continuous signed measure on $\R^n$
with respect to Lebesgue measure and $|\nu|(\R^n)<\infty$. Define
\Be\label{frac with mea}
T_{\Om,\alp}\nu(x)=\int\fr{\Om(x-y)}{|x-y|^{n-\alp}}d\nu(y).
\Ee

We have the following results for $T_{\Om,\alp}$.
\begin{theorem}\label{t:2}
Let $\nu$ be an absolutely continuous signed measure on $\R^n$
with respect to Lebesgue measure and $|\nu|(\R^n)<\infty$. Let $0<\alp<n$ and $r=\fr{n}{n-\alp}$. Suppose $\Om$ satisfies \eqref{e:Mo1}, \eqref{e:Mo2} and the $L^r_\alp$-Dini condition. Then
\Bes
\lim\limits_{\lam\rta 0_+}\lam^{r} m(\{x\in\R^n:|T_{\Om,\alp} \nu(x)|>\lam\})= \fr{1}{n}\|\Om\|^{r}_{r}|\nu(\R^n)|^r.
\Ees
\end{theorem}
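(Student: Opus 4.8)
The plan is to peel off the ``main term'' of $T_{\Om,\alp}\nu$ at spatial infinity and show everything else is negligible for the weak-type limit. Write $K_\alp(x)=\Om(x)|x|^{\alp-n}$; since $\int d\nu=\nu(\R^n)$,
\[
T_{\Om,\alp}\nu(x)=\nu(\R^n)K_\alp(x)+E(x),\qquad E(x):=\int_{\R^n}\Big[\frac{\Om(x-y)}{|x-y|^{n-\alp}}-\frac{\Om(x)}{|x|^{n-\alp}}\Big]d\nu(y),
\]
and both sides make sense a.e.\ because $\Om\in L^{r}(\S^{n-1})$ gives $K_\alp\in L^{1}_{\loc}$, hence $K_\alp\ast|\nu|<\infty$ a.e. A polar-coordinate computation ($x=\rho\tet$) gives, for \emph{every} $\lam>0$,
\[
\lam^{r}m\big(\{x:|\nu(\R^n)K_\alp(x)|>\lam\}\big)=\tfrac1n\int_{\S^{n-1}}|\nu(\R^n)|^{r}|\Om(\tet)|^{r}d\si(\tet)=\tfrac1n\|\Om\|_r^{r}|\nu(\R^n)|^{r},
\]
using $r=\tfrac{n}{n-\alp}$; in particular $K_\alp\in L^{r,\infty}(\R^n)$. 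So the main term already realizes the asserted constant, and it remains only to prove that $E$ is lower order, i.e.\ $\lam^{r}m(\{|E|>\lam\})\to0$ as $\lam\to0_+$.

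Granting this, the theorem follows from the stability of the weak-$L^{r}$ limit under negligible perturbations: if $\lim_{\lam\to0}\lam^{r}m(\{|g|>\lam\})=A$ and $\lim_{\lam\to0}\lam^{r}m(\{|h|>\lam\})=0$, then $\lim_{\lam\to0}\lam^{r}m(\{|g+h|>\lam\})=A$ (use $\{|g+h|>\lam\}\subseteq\{|g|>(1-\eps)\lam\}\cup\{|h|>\eps\lam\}$ and the symmetric inclusion, then let $\eps\to0$). The same device, now with $K_\alp\in L^{r,\infty}$ and Young's inequality for Lorentz spaces (so $T_{\Om,\alp}(\nu\mathbf 1_{\{|y|>R\}})\in L^{r,\infty}$ with quasi-norm $\lesssim\|\Om\|_r|\nu|(\{|y|>R\})$), reduces the statement to the case $\supp\nu\subseteq B_R$ for a fixed $R$: split $\nu=\nu\mathbf 1_{B_R}+\nu\mathbf 1_{B_R^{c}}$, apply the compactly supported case to the first piece, and let $R\to\infty$ using $\nu(B_R)\to\nu(\R^n)$ and $|\nu|(B_R^{c})\to0$.

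So assume $\supp\nu\subseteq B_R$. The set $\{|x|\le 2R:|E|>\lam\}$ has measure $\le|B_{2R}|$, contributing $\lam^{r}|B_{2R}|\to0$; the work is for $|x|>2R$. There, with $x=\rho\tet$, $y\in B_R$, put $z=y/\rho$ (so $|z|<\tfrac12$) and $\tet'=\frac{\tet-z}{|\tet-z|}$; the $0$-homogeneity of $\Om$ gives
\[
\frac{\Om(x-y)}{|x-y|^{n-\alp}}-\frac{\Om(x)}{|x|^{n-\alp}}=\rho^{\alp-n}\Big[\frac{\Om(\tet')-\Om(\tet)}{|\tet-z|^{n-\alp}}+\Om(\tet)\big(|\tet-z|^{\alp-n}-1\big)\Big],
\]
which splits $E=E^{(1)}+E^{(2)}$ on $\{|x|>2R\}$. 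Since $\big||\tet-z|^{\alp-n}-1\big|\lesssim|z|\le R/\rho$, we get $|E^{(2)}(x)|\lesssim|\Om(\tet)||x|^{\alp-n-1}$; as $\Om\in L^{r}(\S^{n-1})\subseteq L^{n/(n+1-\alp)}(\S^{n-1})$, polar coordinates give $\lam^{r}m(\{|E^{(2)}|>\lam\})\lesssim\lam^{\,r-n/(n+1-\alp)}\to0$, the exponent being positive because $n-\alp<n+1-\alp$. For $E^{(1)}$, using $|\tet-z|^{\alp-n}\le 2^{n-\alp}$ and integrating over each sphere,
\[
\int_{\S^{n-1}}|E^{(1)}(\rho\tet)|d\si(\tet)\lesssim\rho^{\alp-n}|\nu|(B_R)\,\widetilde\om_1(R/\rho),\qquad\widetilde\om_1(\del):=\sup_{|h|\le\del}\int_{\S^{n-1}}\Big|\Om\big(\tfrac{\tet-h}{|\tet-h|}\big)-\Om(\tet)\Big|d\si(\tet),
\]
the \emph{translation} modulus of continuity of $\Om$. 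By the equivalence of the rotation- and translation-type $L^{1}$-Dini conditions (Theorem \ref{t:5equ}, applied at the level of the weighted integral; its proof yields the pointwise bound $\widetilde\om_1(\del)\lesssim\om_1(C\del)$), the $L^{r}_{\alp}$-Dini condition gives $\int_0^1\widetilde\om_1(\del)\del^{-1-\alp}d\del<\infty$. Then Chebyshev on spheres and integration in $\rho^{n-1}d\rho$, splitting at the radius $\rho_\lam$ with $\rho_\lam^{\alp-n}\widetilde\om_1(R/\rho_\lam)\sim\lam$ and substituting $\del=R/\rho$, yield
\[
\lam^{r}m\big(\{|x|>2R:|E^{(1)}|>\lam\}\big)\lesssim\widetilde\om_1(R/\rho_\lam)^{r}+\lam^{\,r-1}\!\int_0^{R/\rho_\lam}\!\widetilde\om_1(\del)\del^{-1-\alp}d\del,
\]
and both terms tend to $0$ as $\lam\to0$ (then $\rho_\lam\to\infty$; here $r>1$ and the last integral is the tail of a convergent one). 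This proves $\lam^{r}m(\{|E|>\lam\})\to0$, hence the theorem.

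The routine ingredients are the main-term computation, the perturbation lemma, and the reduction to compact support; the heart is the estimate for $E^{(1)}$, where the obstacle is twofold. First, the modulus of continuity that genuinely appears is the \emph{translation} one $\widetilde\om_1$, not the rotation modulus $\om_1$ used to phrase the $L^{r}_{\alp}$-Dini hypothesis, so one must pass between them via a (weighted form of) Theorem \ref{t:5equ}. Second, one must track the exact powers of $\rho=|x|$ and $\lam$ so that the substitution $\del=R/\rho$ produces precisely the weight $\del^{-1-\alp}$ — this is exactly what links the hypothesis $\int_0^1\om_1(\del)\del^{-1-\alp}d\del<\infty$ to the vanishing of the limit and explains why the $L^{r}_{\alp}$-Dini exponent is $1+\alp$ — while checking that all residual powers of $\lam$ stay strictly positive, which holds for every $0<\alp<n$.
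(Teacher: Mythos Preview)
Your argument is correct and shares its analytical core with the paper's, but the packaging is different. The paper proceeds by dilation: it proves a key lemma (Lemma~\ref{l:mualp}) showing that for each fixed $\lam>0$,
\[
\lim_{t\to0_+}\lam^{r}m\big(\{x:|T_{\Om,\alp}\nu_t(x)|>\lam\}\big)=\tfrac1n\|\Om\|_r^{r}|\nu(\R^n)|^{r},
\]
and then converts $t\to0$ into $\lam\to0$ via the scaling identity $T_{\Om,\alp}\nu_t(x)=t^{\alp-n}T_{\Om,\alp}\nu(x/t)$. You instead reduce to compactly supported $\nu$ using the weak-$(1,r)$ bound and work directly with $\lam\to0$. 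The heart of both proofs is identical: the decomposition $T_{\Om,\alp}\nu=\nu(\R^n)K_\alp+E^{(1)}+E^{(2)}$ (the paper's sets $G_{t,1},G_{t,2}$ are precisely the superlevel sets of your $E^{(1)},E^{(2)}$), the Chebyshev estimate on spheres, the substitution $\del=|y|/|x|$ that manufactures the weight $\del^{-1-\alp}$, and the passage from the translation modulus $\widetilde\om_1$ to $\om_1$ via Theorem~\ref{t:5equ2}.

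What your route buys is brevity in this specific setting: since $r=\tfrac{n}{n-\alp}>1$ for $\alp>0$, the straightforward Chebyshev bound
\[
\lam^{r}m\big(\{|x|>2R:|E^{(1)}|>\lam\}\big)\lesssim \lam^{r-1}R^{\alp}\int_0^{1/2}\frac{\widetilde\om_1(\del)}{\del^{1+\alp}}\,d\del\longrightarrow0
\]
already suffices, so your splitting at $\rho_\lam$ is not actually needed (though it is not wrong). The paper's dilation framework, on the other hand, handles $\alp=0$ and $\alp>0$ uniformly: its estimate for $G_{t,1}$ uses the smallness of the \emph{tail} $\int_0^{\eps_t/\eta}\widetilde\om_1(s)s^{-1-\alp}ds$ rather than the factor $\lam^{r-1}$, which is exactly what lets the same argument prove Theorem~\ref{t:main}.
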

\begin{corollary}
Let $0<\alp<n$ and $r=\fr{n}{n-\alp}$. Let $f\in L^{1}(\R^n)$ and $f\geq0$. Suppose $\Om$ satisfies \eqref{e:Mo1}, \eqref{e:Mo2} and the $L^r_\alp$-Dini condition. Then we have
\Bes
\lim\limits_{\lam\rta 0_+}\lam^{r} m(\{x\in\R^n:|T_{\Om,\alp} f(x)|>\lam\})= \fr{1}{n}\|\Om\|^{r}_{r}\|f\|_{1}^r.
\Ees
\end{corollary}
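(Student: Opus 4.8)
The plan is to run the scheme of Theorem \ref{t:main}, adjusting for the homogeneity of the fractional kernel. Put $c=\nu(\R^n)$ and introduce the \emph{model function} $g(x)=\Om(x)|x|^{-(n-\alp)}c$. Since $\Om$ is homogeneous of degree $0$, $|g(x)|>\lam$ is equivalent to $|x|<\bigl(|\Om(x/|x|)|\,|c|/\lam\bigr)^{1/(n-\alp)}$, so integrating $\rho^{n-1}$ in $\rho$ over this range and then over $\S^{n-1}$ gives the \textbf{exact} identity
$$\lam^{r}m(\{x\in\R^n:|g(x)|>\lam\})=\fr1n\|\Om\|_{r}^{r}|c|^{r}\qquad(\lam>0),$$
valid for any $\Om\in L^{r}(\S^{n-1})$ with $r=\fr{n}{n-\alp}$. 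The theorem thus reduces to showing that, as $\lam\to0$, the $\lam^{r}$-weighted distribution function of $T_{\Om,\alp}\nu$ is asymptotically that of $g$; i.e. the error $T_{\Om,\alp}\nu-g$ is negligible in the right scaled sense.

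First I would settle the case $\supp\nu\subset B(0,\rho_{0})$. On $\{|x|\ge 2\rho_{0}\}$ decompose $T_{\Om,\alp}\nu(x)-g(x)=E_{1}(x)+E_{2}(x)$ with
$$E_{1}(x)=\int\fr{\Om(x-y)-\Om(x)}{|x-y|^{n-\alp}}\,d\nu(y),\qquad E_{2}(x)=\int\Om(x)\Bigl(\fr1{|x-y|^{n-\alp}}-\fr1{|x|^{n-\alp}}\Bigr)d\nu(y).$$
For $E_{2}$ the crude bound $|E_{2}(x)|\lesssim|\Om(x/|x|)|\,|x|^{-(n-\alp+1)}\rho_{0}|\nu|(\R^n)$ and a polar computation as above give $m(\{|E_{2}|>t\})\lesssim t^{-n/(n-\alp+1)}$, the constant depending on $\|\Om\|_{L^{r}(\S^{n-1})}$ (here $n/(n-\alp+1)<r$ and $\si(\S^{n-1})<\infty$); since $r-\fr{n}{n-\alp+1}=\fr{n}{(n-\alp)(n-\alp+1)}>0$, this forces $\lam^{r}m(\{|E_{2}|>\eps\lam\})\to0$ for every $\eps>0$. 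For $E_{1}$, integrate over the sphere $\{|x|=\rho\}$; since $\theta\mapsto(\rho\theta-y)/|\rho\theta-y|$ differs from a rotation by $O(|y|/\rho)$, the spherical average of $|\Om((\rho\theta-y)/|\rho\theta-y|)-\Om(\theta)|$ is dominated by the translation modulus of $\Om$ at scale $C|y|/\rho$, hence by $\om_{1}(C|y|/\rho)\le\om_{1}(C\rho_{0}/\rho)$ in view of Theorem \ref{t:5equ}, whence
$$\int_{|x|\ge 2\rho_{0}}|E_{1}(x)|\,dx\lesssim|\nu|(\R^n)\int_{2\rho_{0}}^{\infty}\rho^{\alp-1}\om_{1}(C\rho_{0}/\rho)\,d\rho\lesssim|\nu|(\R^n)\int_{0}^{1}\fr{\om_{1}(\del)}{\del^{1+\alp}}\,d\del<\infty$$
by the $L^{r}_{\alp}$-Dini condition. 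Hence $E_{1}\in L^{1}(\{|x|\ge2\rho_{0}\})$ and, since $r>1$, $\lam^{r}m(\{|E_{1}|>\eps\lam\})\to0$. The ball $\{|x|<2\rho_{0}\}$ has finite measure, so it contributes $\lam^{r}\cdot O(1)\to0$ to each distribution function involved. Combining, from the inclusions $\{|g+E|>\lam\}\subseteq\{|g|>(1-\eps)\lam\}\cup\{|E|>\eps\lam\}$ and $\{|g+E|>\lam\}\supseteq\{|g|>(1+\eps)\lam\}\setminus\{|E|>\eps\lam\}$ and the exact identity for $g$ we get
$$(1+\eps)^{-r}\fr1n\|\Om\|_{r}^{r}|c|^{r}\le\liminf_{\lam\to0}\lam^{r}m(\{|T_{\Om,\alp}\nu|>\lam\})\le\limsup_{\lam\to0}\lam^{r}m(\{|T_{\Om,\alp}\nu|>\lam\})\le(1-\eps)^{-r}\fr1n\|\Om\|_{r}^{r}|c|^{r};$$
letting $\eps\to0$ proves the assertion for compactly supported $\nu$.

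For general $\nu$, I would first record the weak-type bound $\|T_{\Om,\alp}\mu\|_{L^{r,\infty}(\R^n)}\lesssim\|\Om\|_{L^{r}(\S^{n-1})}|\mu|(\R^n)$ for every finite signed measure $\mu$; it follows from $\Om(\cdot)|\cdot|^{-(n-\alp)}\in L^{r,\infty}(\R^n)$ (the same polar computation, with norm comparable to $\|\Om\|_{r}$) and the weak-type Young inequality $L^{r,\infty}*L^{1}\hookrightarrow L^{r,\infty}$. In particular $T_{\Om,\alp}\nu$ is finite a.e. Given $\eps>0$, choose $R$ with $|\nu|(\R^n\setminus B(0,R))<\eps^{2}$ and split $\nu=\nu_{R}+\nu^{R}$, $\nu_{R}=\nu|_{B(0,R)}$, so that $|\nu_{R}(\R^n)|\to|\nu(\R^n)|$ and $|\nu^{R}|(\R^n)\to0$ as $R\to\infty$. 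Splitting the level set of $T_{\Om,\alp}\nu$ as before, applying the compactly supported case to $\nu_{R}$ and the weak-type bound to $\nu^{R}$, and letting $R\to\infty$ and then $\eps\to0$, we obtain $\lim_{\lam\to0}\lam^{r}m(\{|T_{\Om,\alp}\nu|>\lam\})=\fr1n\|\Om\|_{r}^{r}|\nu(\R^n)|^{r}$, which is the claim. The main obstacle is the estimate for $E_{1}$: converting the purely $L^{1}$-type Dini information on $\Om$ into a genuine decay estimate for the error forces the passage to spherical averages and the translation-versus-rotation comparison, and there the exponent bookkeeping---$\rho^{\alp-1}$ paired with $\om_{1}(C\rho_{0}/\rho)$ matching $\del^{-1-\alp}\om_{1}(\del)$---is exactly what has to come out right; the remaining steps are the same soft splitting argument as for Theorem \ref{t:main}.
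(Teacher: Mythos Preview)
Your argument is correct, but it takes a different route from the paper's. The paper derives the corollary (and Theorem~\ref{t:2}) from a \emph{dilation} lemma (Lemma~\ref{l:mualp}): one shows that for every fixed $\lam>0$,
\[
\lim_{t\to0_+}\lam^{r}m(\{|T_{\Om,\alp}\mu_t|>\lam\})=\tfrac1n\|\Om\|_r^{r}|\mu(\R^n)|^{r},
\]
by concentrating $\mu_t$ near the origin and comparing $T_{\Om,\alp}\mu_t$ to $\Om(x)|x|^{-(n-\alp)}\mu_t(\R^n)$ on an exterior ball; the scaling identity $T_{\Om,\alp}\mu_t(x)=t^{-(n-\alp)}T_{\Om,\alp}\mu(x/t)$ then converts the $t\to0$ limit into the $\lam\to0$ limit for the original $\mu$. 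You instead work directly at the level of $\lam\to0$: first for compactly supported $\nu$, you compare $T_{\Om,\alp}\nu$ with the model $g$ on $\{|x|\ge2\rho_0\}$ and show the errors $E_1,E_2$ satisfy $\lam^{r}m(\{|E_i|>\eps\lam\})\to0$; then you pass to general $\nu$ by truncation and the weak-type $(1,r)$ bound. The pivotal estimates---the $L^{r}_{\alp}$-Dini bound on $\int_{|x|\ge2\rho_0}|E_1|\,dx$ and the pointwise $|x|^{-(n-\alp+1)}$ decay for $E_2$---mirror the bounds for $m(G_{t,1})$ and $m(G_{t,2})$ in the paper, so the analytic core is the same. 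What is different is how you cash them in: you use $r>1$ to absorb the fixed $L^{1}$ norm of $E_1$ into $\lam^{r-1}$, and the faster decay exponent $n/(n-\alp+1)<r$ for $E_2$. This shortcut is specific to the fractional case and would fail at $\alp=0$ (where $r=1$); the paper's dilation scheme, by contrast, forces the error constants themselves to vanish and therefore treats $\alp=0$ and $\alp>0$ uniformly. One small remark: the reference for the rotation/translation equivalence in the $L^{r}_{\alp}$ setting is Theorem~\ref{t:5equ2} rather than Theorem~\ref{t:5equ}, though of course only the $\tilde{\om}_1\lesssim\om_1$ direction (Lemma~\ref{l:3omesti}) is needed in your $E_1$ bound.
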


We would like to point out the proof of Theorem \ref{t:main} follows the idea from \cite{PJ}. However, to establish the limiting behavior of
the singular integral operator $T_\Om$ with $\Om$ satisfying the $L^1$-Dini condition,
we need study carefully the regularity of $\Om$. More precisely, we will
show that two different $L^1$-Dini conditions are equivalent (see Theorem \ref{t:5equ}).

The paper is organized as follows. In Section \ref{s:52}, we give some properties of the $L^1$-Dini condition and the embedding relation between the regularity condition \eqref{e:omega} and the $L^1$-Dini condition.
An example which shows the $L^1$-Dini condition is weaker than the condition \eqref{e:omega} is also given in this section.
The proof of Theorem \ref{t:main} is given in Section \ref{5sec:main}.
The outline of the proof of Theorem \ref{t:2} is given in final section. Throughout this paper the letter $C$ will stand for a positive constant not necessarily the same one in each occurrence.

\section{$L^1$-Dini condition}\label{s:52}

In this section, we discuss some properties of the $L^1$-Dini condition. We first show that the regularity condition \eqref{e:omega} is stronger than the $L^1$-Dini condition.

\begin{Proposition}\label{l:3ker}
If $\Om$ satisfies \eqref{e:Mo1}, \eqref{e:Mo2} and the condition \eqref{e:omega}, then $\Om$ satisfies $L^1$-Dini condition.
\end{Proposition}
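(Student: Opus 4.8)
The plan is to bound the rotational modulus of continuity $\om_1(\del)$ by the translational quantity appearing in \eqref{e:omega}, and then to invoke the Dini integrability of the latter. The essential geometric observation is that for a rotation $\rho$ with $\|\rho\|=\sup\{|\rho x'-x'|:x'\in\S^{n-1}\}$ small, the displacement $\rho\tet-\tet$ has length comparable to $\|\rho\|$, and writing $\rho\tet=\tet+(\rho\tet-\tet)$ lets us compare $\Om(\rho\tet)$ with $\Om(\tet+\del\xi)$ for a suitable unit vector $\xi=\xi(\tet)$ and $\del\approx\|\rho\|$. Since $\Om$ is homogeneous of degree zero, however, one must be slightly careful: $\tet+\del\xi$ need not lie on $\S^{n-1}$, so by \eqref{e:Mo1} the value $\Om(\tet+\del\xi)$ equals $\Om\big((\tet+\del\xi)/|\tet+\del\xi|\big)$, and the point $\rho\tet$ is exactly the radial projection (for the relevant choice) or at least differs from it by $O(\del^2)$. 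This is the step I expect to be the main obstacle — reconciling the extrinsic translation in \eqref{e:omega} with the intrinsic rotation in Definition \ref{d:5L1} while keeping uniform control over the implied constants.

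Concretely, I would proceed as follows. First, fix $\del>0$ small and a rotation $\rho$ with $\|\rho\|\le\del$. For each $\tet\in\S^{n-1}$ decompose $\rho\tet-\tet=t(\tet)\,\xi(\tet)$ with $\xi(\tet)\in\S^{n-1}$ and $t(\tet)=|\rho\tet-\tet|\le\del$. Then estimate
\[
\int_{\S^{n-1}}|\Om(\rho\tet)-\Om(\tet)|\,d\si(\tet)
\le\int_{\S^{n-1}}|\Om(\rho\tet)-\Om(\tet+t(\tet)\xi(\tet))|\,d\si(\tet)
+\int_{\S^{n-1}}|\Om(\tet+t(\tet)\xi(\tet))-\Om(\tet)|\,d\si(\tet).
\]
The second term, after controlling the variable step size $t(\tet)$ by $\del$ (splitting dyadically in $t$, or using homogeneity to rescale onto the sphere), is dominated by a constant multiple of $\sup_{|\xi|=1}\int_{\S^{n-1}}|\Om(\tet)-\Om(\tet+\del'\xi)|\,d\si(\tet)$ for $\del'\lesssim\del$; the first term vanishes by homogeneity because $\rho\tet$ and $\tet+t(\tet)\xi(\tet)=\rho\tet$ coincide — indeed $t(\tet)\xi(\tet)=\rho\tet-\tet$ by definition, so this term is identically zero and the apparent obstacle dissolves once the decomposition is written correctly; the real content is just the change of variables from the translation parameter to $\del$ and the uniformity in $\xi$.

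This yields $\om_1(\del)\le C\sup_{|\xi|=1}\int_{\S^{n-1}}|\Om(\tet)-\Om(\tet+c\del\,\xi)|\,d\si(\tet)$ for $0<\del<1/(Cn)$, with $c$ an absolute constant. Dividing by $\del$ and integrating, a change of variables $\del\mapsto c\del$ together with \eqref{e:omega} gives
\[
\int_0^{1/(Cn)}\frac{\om_1(\del)}{\del}\,d\del
\le C\int_0^{1/n}\frac{1}{\del}\Big(\sup_{|\xi|=1}\int_{\S^{n-1}}|\Om(\tet)-\Om(\tet+\del\xi)|\,d\si(\tet)\Big)\,d\del
\le C\int_0^{1/n}\frac{Cn\del\,\|\Om\|_1}{\del}\,d\del
= C'\|\Om\|_1<\infty,
\]
and the remaining range $\del\in[1/(Cn),1]$ is harmless since there $\om_1(\del)\le 2\|\Om\|_1$ and $\int_{1/(Cn)}^1 d\del/\del<\infty$. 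Combined with the already-assumed $\Om\in L^1(\S^{n-1})$, this verifies both clauses of Definition \ref{d:5L1}, completing the proof.
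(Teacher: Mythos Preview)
Your overall strategy matches the paper's: bound the rotational modulus $\om_1(\del)$ by the translational quantity appearing in \eqref{e:omega}, then split $\int_0^1\om_1(\del)\del^{-1}\,d\del$ near $\del\sim 1/n$ and use $\om_1\le 2\|\Om\|_1$ on the tail. The paper's key step is the inequality $\om_1(\del)\le\sup_{|\xi|=1}\int_{\S^{n-1}}|\Om(\tet+2\del\xi)-\Om(\tet)|\,d\si(\tet)$ for $0<\del<\tfrac12$, after which the Dini integral is finished exactly as you describe.

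The gap in your write-up is the justification of that key inequality. Your ``decomposition'' is vacuous: by your own definition $t(\tet)\xi(\tet)=\rho\tet-\tet$, so the first term is identically zero (as you yourself observe) and the second term is literally the integral $\int|\Om(\rho\tet)-\Om(\tet)|\,d\si(\tet)$ you started with. The substantive claim is therefore that
\[
\int_{\S^{n-1}}\bigl|\Om\bigl(\tet+t(\tet)\xi(\tet)\bigr)-\Om(\tet)\bigr|\,d\si(\tet)\ \le\ C\,\sup_{|\xi|=1}\int_{\S^{n-1}}|\Om(\tet+\del'\xi)-\Om(\tet)|\,d\si(\tet),
\]
and your parenthetical (``splitting dyadically in $t$, or using homogeneity to rescale'') speaks only to the variable \emph{magnitude} $t(\tet)$, never to the variable \emph{direction} $\xi(\tet)$. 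For an arbitrary measurable selection $\tet\mapsto\xi(\tet)$ the supremum over $\xi$ would end up inside the integral rather than outside, so some genuine argument exploiting the rotation structure is required; your phrase ``the uniformity in $\xi$'' names the issue but does not resolve it. The paper supplies geometric content here: it proves, for each fixed $\tet$, the set containment
\[
\{\eta\in\S^{n-1}:|\eta-\tet|\le\del\}\ \subset\ \Bigl\{\tfrac{\tet+2\del\xi}{|\tet+2\del\xi|}:\xi\in\S^{n-1}\Bigr\}
\]
by analyzing the continuous function $f(\xi)=\bigl|\tfrac{\tet+2\del\xi}{|\tet+2\del\xi|}-\tet\bigr|$ on $\S^{n-1}$ and checking that its maximum is at least $\del$. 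You should either carry out a comparable geometric step or invoke an equivalence between the rotational and translational $L^1$ moduli to close the argument.
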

\begin{proof}
We first claim that if $\Om$ satisfies \eqref{e:Mo1}, \eqref{e:Mo2} and  \eqref{e:omega}, then there exists $C>0$ such that
\begin{equation}\label{e:3om}
\om_1(\del)\leq\sup_{|\xi|=1}\int_{\S^{n-1}}|\Om(\tet+C\del\xi)-\Om(\tet)|d\tet
\end{equation}
for any $0<\del<\fr{1}{2}$.
To prove (\ref{e:3om}), by Definition \ref{d:5L1}, it is enough to show that for any fixed $\tet\in \S^{n-1}$,
$$\{\rho\tet:\|\rho\|\leq\del\}\subset\Big\{\fr{\tet+C\del\xi}
{|\tet+C\del\xi|}:\ \xi\in \S^{n-1}\Big\}$$
for some constant $C>0$. For convenience, let
$$A=\{\rho\tet:\|\rho\|\leq\del\}$$
and
$$B(C)=\Big\{\fr{\tet+C\del\xi}
{|\tet+C\del\xi|}:\ \xi\in \S^{n-1}\Big\}.$$
Thus  $A=\{\eta\in \S^{n-1}:\ |\eta-\tet|\leq\del\}$.
Choose $C=2$, we will show that
\begin{equation}\label{includ}
B(2)\supset A.\end{equation}
Notice that the function $f(\xi)=\Big|\fr{\tet+2\del\xi}{|\tet+2\del\xi|}-\tet\Big|$ is continuous on $\S^{n-1}$. Since $\S^{n-1}$ is compact, then $f(\xi)$ can get its maximal value at a point of $\S^{n-1}$. Suppose $\xi_0$ is such a point that $f(\xi)$ get its maximal value at $\xi_0$.
Since $f(\tet)=0$ and $f(-\tet)=0$, $\xi_0$  must be located between $\tet$ and $-\tet$.
Therefore again by the continuity of $f(\xi)$,
 $$B(2)=\{\eta\in \S^{n-1}:\ |\eta-\tet|\leq \gamma\}\quad \text{with} \quad \gamma=f(\xi_0).$$
So to prove \eqref{includ}, it suffices to show that $\gamma\geq\del$. By rotation, we may suppose $\tet=(1,0,0,\cdots,0)$. Choose $\xi=(0,1,0,\cdots,0)$. Then
$$\gamma\geq\Big|\fr{\tet+2\del\xi}{|\tet+2\del\xi|}-\tet\Big|=\Big(2-\fr{2}{\sqrt{1+4\del^2}}\Big)^{\fr{1}{2}}\geq\del.$$
Hence we prove (\ref{e:3om}) by choosing $C=2$.

Now we split the integral $\int_0^1\fr{\om_1(\del)}{\del}d\del$ into two parts:
$$\int_0^{\fr{1}{2n}}\fr{\om_1(\del)}{\del}d\del+\int_{\fr{1}{2n}}^{1}\fr{\om_1(\del)}{\del}d\del.$$
For the first integral, using estimate (\ref{e:3om}) and the regularity condition (\ref{e:omega}), we can get the bound $C\|\Om\|_1$. For the second integral, using $\om_1(\del)\leq2\|\Om\|_1$ for any $0<\del<1$, we can also get the bound $C\|\Om\|_1$. Combining these, the proof is completed.
\end{proof}
In the following, we give an example which satisfies \eqref{e:Mo1}, \eqref{e:Mo2} and the $L^1$-Dini condition but does not satisfy the regularity condition \eqref{e:omega}.
\begin{example}\label{exa:3}
Consider dimension $n=2$, in this case, we denote $\S^1=\{\theta:\ 0\leq\tet\le 2\pi\}$, where $\tet$ is the arc length on the unit circle. Let $\Om(\tet)=\tet^{-\fr{1}{2}}-(\frac 2{\pi})^{\fr{1}{2}}$. It can be easily extended to the whole space $\R^2$ such that
$\Om$ is homogeneous of degree zero.

By using representation of the differential of arc length, the integral of $\Om$ on $\S^1$ can be rewritten as
$$\int_0^{2\pi}\Om(\tet)d\tet,$$
where $\tet$ is again the arc length. Obviously, $\Om$ in Example \ref{exa:3} satisfies \eqref{e:Mo2}.

Now  let us first show that $\Om$ in Example \ref{exa:3} does not satisfy
the regularity condition ($\ref{e:omega}$). In fact, let $\del$ be small enough. In two dimension, for any rotation $\|\rho\|\leq\delta$, we have $\rho\tet=\tet\pm s$, where $s=\|\rho\|$.
For the case $\rho\tet=\tet+s$, we have
\begin{equation*}
\begin{split}
 \int_0^{2\pi}|\Om(\rho\tet)-\Om(\tet)|d\tet
&=\int_0^{2\pi-s}\bigg|\fr{1}{\tet^{1/2}}-\fr{1}{(\tet+s)^{1/2}}\bigg|d\tet\\
&\ \ \ \ +\int_{2\pi-s}^{2\pi}\bigg|\fr{1}{\tet^{1/2}}
-\fr{1}{(\tet+s-2\pi)^{1/2}}\bigg|d\tet\\
&=4((2\pi-s)^{1/2}-(2\pi)^{1/2}+s^{1/2})=:g(s),
\end{split}
\end{equation*}
where in the first equality we use the fact that when $\tet\in(2\pi-s,2\pi)$, $\rho\tet$ falls into $(0,s)$.
A similar computation shows that if $\rho\tet=\tet-s$,
\begin{equation*}
\int_0^{2\pi}|\Om(\rho\tet)-\Om(\tet)|d\tet=g(s).
\end{equation*}
It is not difficult to see that $g(s)$ is an increased function for $s\in[0,\del]$ and $g(0)=0$. Therefore we have
$$\om_1(\del)=\sup_{\|\rho\|\leq\del}\int_0^{2\pi}|\Om(\rho\tet)-\Om(\tet)|d\tet=g(\del).$$
Now by \eqref{e:3om} in Lemma \ref{l:3ker} (note that constant $C=2$), we have
\Bes
\begin{split}
\fr1{2\del}\sup_{|\xi|=1}\int_{\S^1}&|\Om(\tet+2\del\xi)
-\Om(\tet)|d\tet\geq
\fr1{2\del}\om_1(\del)\\
&=2\Big(\fr{1}{\del^{1/2}}-\fr{(2\pi)^{1/2}-(2\pi-\del)^{1/2}}{\del}\Big)\rta+\infty
\end{split}
\Ees
as $\del\rta0$. This means that $\Om$ does not satisfy the regularity condition \eqref{e:omega}.
By a direct computation, we have
$$\int_0^1\fr{\om_1(\del)}{\del}d\del=4\int_0^1\Big(\fr{1}{\del^{1/2}}-\fr{(2\pi)^{1/2}-(2\pi-\del)^{1/2}}{\del}\Big)d\del<\infty$$
and
$$\int_0^{2\pi}|\Om(\tet)|d\tet<\infty$$
which means that $\Om$ satisfies the $L^1$-Dini condition in Definition
\ref{d:5L1}.
\end{example}

In order to prove Theorem \ref{t:main}, we need to give an equivalent definition of the $L^1$-Dini condition in Definition
\ref{d:5L1}.

Recall in Definition \ref{d:5L1}, the $L^1$-Dini condition is defined by
the $L^1$ integral
modulus $\om_1$, and $\om_1$ is defined by  ROTATION in $\R^n$.    In \cite{CWZ}, Calder\'on, Weiss and Zygmund gave
another $L^1$ integral
modulus $\tilde{\om}_1$ which is defined by TRANSLATION in $\R^n$. Let $\Om$ satisfy \eqref{e:Mo1} and $\Om\in L^1(\S^{n-1})$.
Define $\tilde{\om}_1$ as
\begin{equation}\label{defin2}
\tilde{\om}_1(\del)=\sup\limits_{|h|\leq\del}\int_{\S^{n-1}}
|\Om(x'+h)-\Om(x')|d\si(x'),
\end{equation}
where $h\in\R^n$. Similarly, one may define the $L^1$-Dini condition by the
$L^1$ integral modulus $\tilde{\om}_1$.
\begin{definition}\label{d:5L11}
Let $\Om$ satisfy \eqref{e:Mo1}.  It is said that $\Om$ satisfies the $L^1$-Dini condition if:

(i)\ $\Om\in L^1(\S^{n-1})$;

(ii)\ $\int_0^1\fr{\tilde{\om}_1(\del)}{\del}d\del<\infty$, where $\tilde{\om}_1(\del)$ is defined by \eqref{defin2}.
\end{definition}

As it is pointed out in \cite{CWZ}, the $L^1$-Dini condition in Definition
\ref{d:5L1} is the most natural one. However, in some cases, the $L^1$-Dini definition in Definition \ref{d:5L11} is more convenient in application.
Thus, a natural problem is that, is there any relationship between those two kind of $L^1$-Dini conditions defined by
Definition \ref{d:5L1} and Definition \ref{d:5L11}, respectively.

Below we will show that these two kind $L^1$-Dini conditions defined respectively by Definition \ref{d:5L1} and Definition \ref{d:5L11} are equivalent indeed.
Let us first recall a useful lemma.
\begin{lemma}[see Lemma 5 in \cite{CWZ}]\label{l:3omesti}
There exist positive constants $\alp_0$, $C$ depending only on the dimension
$n$ such that if $\Om$ is any  function integrable over $\S^{n-1}$ and $0<|h|\leq\alp_0$, $h\in\R^n$, then
\begin{equation}
\int_{\S^{n-1}}|\Om(\xi-h)-\Om(\xi)|d\si(\xi)\leq C\sup_{\|\rho\|\leq|h|}\int_{\S^{n-1}}|\Om(\rho\xi)-\Om(\xi)|d\si(\xi).
\end{equation}
\end{lemma}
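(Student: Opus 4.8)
The plan is to derive the translation estimate from the rotation estimate by \emph{averaging a translation over a small ball of rotations}. By the rotational invariance of $d\sigma$ I may assume $h=te_1$, $e_1=(1,0,\dots,0)$, with $0<t\le\alpha_0$ and $\alpha_0\in(0,\tfrac12)$ a small dimensional constant. Since $\Omega$ is homogeneous of degree $0$,
$$\Omega(\xi-te_1)=\Omega(\Phi_t\xi),\qquad \Phi_t\xi:=\frac{\xi-te_1}{|\xi-te_1|},$$
and for $\alpha_0$ small $\Phi_t$ is a diffeomorphism of $\S^{n-1}$ with $|\Phi_t\xi-\xi|\le C_n t$ for all $\xi$ and Jacobian in $[\tfrac12,2]$. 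Put $\Lambda(\delta):=\sup_{\|\rho\|\le\delta}\int_{\S^{n-1}}|\Omega(\rho\xi)-\Omega(\xi)|\,d\sigma(\xi)$. The first step is a quasi-subadditivity of $\Lambda$: if $\rho=\rho_1\cdots\rho_N$, the left invariance of $d\sigma$ and a telescoping sum give $\int_{\S^{n-1}}|\Omega(\rho\xi)-\Omega(\xi)|\,d\sigma\le\sum_{j=1}^N\int_{\S^{n-1}}|\Omega(\rho_j\xi)-\Omega(\xi)|\,d\sigma$, and every $\rho$ with $\|\rho\|\le C\delta$ factors as a product of $N(C)$ rotations of norm $\le\delta$ (write $\rho=\exp X$ and split $X$); hence $\Lambda(C\delta)\le N(C)\,\Lambda(\delta)$ for $\delta$ small. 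Thus it suffices to bound $\int_{\S^{n-1}}|\Omega\circ\Phi_t-\Omega|\,d\sigma$ by a dimensional multiple of $\Lambda(C_n t)$.

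Let $\mu$ be normalized Haar measure on $U_t:=\{\rho\in SO(n):\|\rho\|\le C_n t\}$, with $C_n$ large enough that $\Phi_t\xi\in\{\rho\xi:\rho\in U_t\}$ for every $\xi$. For fixed $\xi$, the triangle inequality integrated against the probability measure $\mu$ gives
$$|\Omega(\Phi_t\xi)-\Omega(\xi)|\le\int_{U_t}|\Omega(\Phi_t\xi)-\Omega(\rho\xi)|\,d\mu(\rho)+\int_{U_t}|\Omega(\rho\xi)-\Omega(\xi)|\,d\mu(\rho).$$
Integrating in $\xi$, by Fubini the second term is $\le\sup_{\|\rho\|\le C_n t}\int_{\S^{n-1}}|\Omega(\rho\xi)-\Omega(\xi)|\,d\sigma=\Lambda(C_n t)$, which is acceptable. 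So the problem reduces to estimating $I:=\int_{\S^{n-1}}\int_{U_t}|\Omega(\Phi_t\xi)-\Omega(\rho\xi)|\,d\mu(\rho)\,d\sigma(\xi)$.

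For $I$ I would transfer the rotation-average onto the sphere. The pushforward $\nu_\xi$ of $\mu$ under $\rho\mapsto\rho\xi$ is a probability measure supported in $B(\xi,C_n t)$ which, by the coarea formula applied to this submersion (fibres are cosets of $SO(n-1)$, Jacobian bounded above and below), is absolutely continuous with density $\le C_n t^{-(n-1)}$ with respect to $d\sigma$, uniformly in $\xi$ by the $SO(n)$-equivariance of the construction. Hence $I\le C_n t^{-(n-1)}\int_{\S^{n-1}}\int_{B(\xi,C_n t)}|\Omega(\Phi_t\xi)-\Omega(\zeta)|\,d\sigma(\zeta)\,d\sigma(\xi)$; changing variables $\xi=\Phi_t^{-1}\eta$ (Jacobian $\le 2$) and using $B(\Phi_t^{-1}\eta,C_n t)\subseteq B(\eta,C_n' t)$, this is $\le C_n t^{-(n-1)}\int_{\S^{n-1}}\int_{B(\eta,C_n' t)}|\Omega(\eta)-\Omega(\zeta)|\,d\sigma(\zeta)\,d\sigma(\eta)$, an expression with no $\Phi_t$ in it. To finish, for each $\delta$ one exhibits a probability measure $\bar\lambda_\delta$ on $\{\rho:\|\rho\|\le C_n\delta\}$ (conjugation-invariant, hence fitting every base point uniformly) whose pushforward under $\rho\mapsto\rho\eta$ has density $\ge c_n\delta^{-(n-1)}$ on $B(\eta,\delta)$ for all $\eta$; then $\int_{B(\eta,\delta)}g\,d\sigma\le C_n\delta^{n-1}\int g(\rho\eta)\,d\bar\lambda_\delta(\rho)$ for $g\ge0$, and with $g=|\Omega(\eta)-\Omega(\cdot)|$, integrating in $\eta$ and using Fubini and the definition of $\Lambda$ gives $\int_{\S^{n-1}}\int_{B(\eta,\delta)}|\Omega(\eta)-\Omega(\zeta)|\,d\sigma(\zeta)\,d\sigma(\eta)\le C_n\delta^{n-1}\Lambda(C_n\delta)$. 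With $\delta=C_n' t$ the factors $t^{-(n-1)}$ and $t^{n-1}$ cancel, so $I\le C_n\Lambda(C_n t)$; combining with the $\Lambda(C_n t)$ from the second paragraph and the quasi-subadditivity of $\Lambda$ completes the proof.

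The conceptual difficulty this circumvents is that the rotation carrying $\xi$ to $\xi-h$ depends on $\xi$, so a direct partition of $\S^{n-1}$ into caps and replacement of $\Phi_t$ by a fixed rotation on each cap would cost a factor equal to the number of caps; the averaging removes this obstruction. The genuinely technical points, which I would write out carefully, are the two uniform density estimates for the pushforwards of rotation-averages onto $\S^{n-1}$ — the upper bound for $\nu_\xi$ and the lower bound for $(\bar\lambda_\delta)_*$ — both of which follow from the coarea formula plus the facts that $\rho\mapsto\rho\xi$ is a submersion with fibres isometric to $SO(n-1)$ and Jacobian bounded above and below, while their \emph{uniformity in the base point} comes from the transitivity of the $SO(n)$-action and the conjugation-invariance of $\|\cdot\|$. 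Everything else uses only $L^1$ triangle inequalities, Fubini, and changes of variables with controlled Jacobian, so no regularity of $\Omega$ beyond integrability is used.
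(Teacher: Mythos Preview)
The paper does not prove this lemma; it is quoted from Calder\'on--Weiss--Zygmund \cite{CWZ} (Lemma~5 there) and used as a black box, so there is no in-paper proof to compare your proposal against.

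Your proposal is, as far as I can see, a correct and self-contained proof. The idea of inserting an average over $U_t=\{\rho:\|\rho\|\le C_n t\}$ and then transferring that average to a local spherical average via the submersion $\rho\mapsto\rho\xi$ is exactly the right way to defeat the base-point dependence of the rotation taking $\xi$ to $(\xi-h)/|\xi-h|$. The quasi-subadditivity $\Lambda(C\delta)\le N(C)\Lambda(\delta)$ is immediate from the telescoping identity together with $\exp X=(\exp(X/N))^N$, and the two density claims (upper bound for $(\pi_\xi)_*\mu$ on $B(\xi,C_n t)$, lower bound for $(\pi_\eta)_*\bar\lambda_\delta$ on $B(\eta,\delta)$) both reduce, via the conjugation-invariance of $\|\cdot\|$ and unimodularity of $SO(n)$, to a single computation at a fixed base point, where they follow from the coarea formula and the subadditivity $\|\rho_0\sigma\|\le\|\rho_0\|+\|\sigma\|$ (the latter gives enough room in the fibre $\rho_0\cdot\mathrm{Stab}(\eta)$ inside $U_{C_n\delta}$ when $|\zeta-\eta|\le\delta$ and $C_n$ is moderately large). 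The change of variables $\xi=\Phi_t^{-1}\eta$ with $B(\Phi_t^{-1}\eta,C_n t)\subset B(\eta,C_n' t)$ is fine for small $t$.

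One minor remark: your reduction of $I$ to the symmetric double integral and then back to $\Lambda$ via a second rotation-average is somewhat roundabout. Since you have already established both an \emph{upper} and a \emph{lower} density bound for the pushforward of normalized Haar on $U_{C_n t}$, you can absorb the change of variables $\Phi_t^{-1}$ directly into the rotation average (because $|\Phi_t\xi-\xi|\le C_n t$ means $\rho\mapsto\rho\Phi_t^{-1}\eta$ and $\rho\mapsto\rho\eta$ have comparable pushforwards on balls of radius $\sim t$), skipping the intermediate ``local oscillation'' integral entirely. Either way the argument closes.
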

Note that we may choose the constant $\alp_0$ in Lemma \ref{l:3omesti} less than $1$.
\begin{theorem}\label{t:5equ}
$L^1$-Dini conditions defined respectively in Definition \ref{d:5L1} and Definition \ref{d:5L11} are equivalent.
\end{theorem}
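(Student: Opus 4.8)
The plan is to prove the two implications separately, and in each direction it suffices to bound one of the integral moduli $\om_1,\tilde\om_1$ by a constant multiple of the other evaluated at a comparable scale, for all small $\del$; this forces the two Dini integrals $\int_0^1\om_1(\del)\del^{-1}d\del$ and $\int_0^1\tilde\om_1(\del)\del^{-1}d\del$ to be simultaneously finite, while the requirement $\Om\in L^1(\S^{n-1})$ is common to both definitions. The implication ``Definition~\ref{d:5L1} $\Rightarrow$ Definition~\ref{d:5L11}'' is immediate from Lemma~\ref{l:3omesti}: replacing $h$ by $-h$ there and taking the supremum over $|h|\le\del$ gives $\tilde\om_1(\del)\le C\om_1(\del)$ for $0<\del\le\alp_0$ (using that $\om_1$ is nondecreasing). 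Writing $\int_0^1\tilde\om_1(\del)\del^{-1}d\del=\int_0^{\alp_0}+\int_{\alp_0}^1$, the first term is $\le C\int_0^{\alp_0}\om_1(\del)\del^{-1}d\del<\infty$ and the second is $\le 2\|\Om\|_1\int_{\alp_0}^1\del^{-1}d\del<\infty$ because $\tilde\om_1(\del)\le 2\|\Om\|_1$ always.

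The reverse implication ``Definition~\ref{d:5L11} $\Rightarrow$ Definition~\ref{d:5L1}'' is the substantial half, and the key estimate I would establish is
\[
\om_1(\del)\le C\,\tilde\om_1(C\del),\qquad 0<\del\le\del_0,
\]
with $C,\del_0$ depending only on $n$. To prove it I introduce the auxiliary ``double modulus''
\[
\mathcal D(\del):=\del^{1-n}\int_{\S^{n-1}}\int_{\{z'\in\S^{n-1}\,:\,|z'-x'|\le\del\}}|\Om(z')-\Om(x')|\,d\si(z')\,d\si(x'),
\]
which is finite for every $\Om\in L^1(\S^{n-1})$ since $|B(x',\del)\cap\S^{n-1}|\asymp\del^{n-1}$, and I show $\om_1(\del)\lesssim\mathcal D(C\del)$ and $\mathcal D(\del)\lesssim\tilde\om_1(C\del)$, which chain to the displayed estimate. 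For the first bound, fix a rotation $\rho$ with $\|\rho\|\le\del$ and for $x'\in\S^{n-1}$ let $A_{x'}$ be the average of $\Om$ over $B(x',\del/2)\cap\S^{n-1}$; from
\[
|\Om(\rho x')-\Om(x')|\le|\Om(\rho x')-A_{x'}|+|A_{x'}-\Om(x')|,
\]
integrating in $x'$, substituting $y'=\rho x'$ in the first term, and using $|\rho x'-x'|\le\del$ (so that $B(\rho^{-1}y',\del/2)\cap\S^{n-1}\subseteq B(y',\tfrac{3}{2}\del)\cap\S^{n-1}$), each term is bounded by a dimensional constant times $\mathcal D(\tfrac{3}{2}\del)$, uniformly in $\rho$; the supremum over $\rho$ then gives $\om_1(\del)\lesssim\mathcal D(\tfrac{3}{2}\del)$. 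For the second bound I average a short translation: for $|h|\le r$ one has $\Om(x'+h)=\Om\big(\tfrac{x'+h}{|x'+h|}\big)$ with $\tfrac{x'+h}{|x'+h|}$ within $Cr$ of $x'$, so averaging the inequality $\int_{\S^{n-1}}|\Om(x'+h)-\Om(x')|\,d\si(x')\le\tilde\om_1(r)$ over $h$ in the ball $\{|h|\le r\}$ and passing to polar coordinates $h=tz'-x'$, $z'\in\S^{n-1}$, $t>0$ (Jacobian $t^{n-1}$), yields $\mathcal D(cr)\lesssim\tilde\om_1(r)$ for constants depending only on $n$. Once $\om_1(\del)\le C\tilde\om_1(C\del)$ is in hand, choosing $\del_0$ with $C\del_0\le 1$ and splitting $\int_0^1\om_1(\del)\del^{-1}d\del$ at $\del_0$ exactly as in the easy direction shows this integral is finite, so $\Om$ satisfies Definition~\ref{d:5L1}.

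I expect the reverse implication to be the only genuine obstacle. The point is that a rotation moves each point of $\S^{n-1}$ in a direction that depends on the point, so it cannot be realised — not even approximately, and not even after reducing to elementary $2$-plane rotations — by boundedly many translations, and a naive telescoping of $\Om(\rho x')-\Om(x')$ through intermediate points simply stays in the orbit and returns to rotation moduli. The mollification step, comparing $\Om$ to its local averages $A_{x'}$, is what breaks this deadlock, trading the rigid rotation for a ``short but variable'' displacement that the translation modulus, averaged over a small ball of directions, does control. The only further inputs are the elementary change of variables $h=tz'-x'$ and the standard estimate $|B(x',\del)\cap\S^{n-1}|\asymp\del^{n-1}$ for small $\del$, both routine.
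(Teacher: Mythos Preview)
Your proposal has the two directions of difficulty reversed, and there is a genuine gap in the direction you call easy.

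For the implication Definition~\ref{d:5L1} $\Rightarrow$ Definition~\ref{d:5L11} you dispose of the tail $\int_{\alp_0}^1\tilde\om_1(\del)\del^{-1}\,d\del$ by asserting that ``$\tilde\om_1(\del)\le 2\|\Om\|_1$ always.'' This is exactly the nontrivial point. Unlike rotations, the map $x'\mapsto (x'+h)/|x'+h|$ does \emph{not} preserve surface measure on $\S^{n-1}$, so one cannot simply bound $\int_{\S^{n-1}}|\Om(x'+h)|\,d\si(x')$ by $\|\Om\|_1$; a uniform bound in $|h|<1$ requires an explicit Jacobian computation. This is precisely the content of the paper's inequality \eqref{e:5Omb}, which is proved there by passing to spherical coordinates and tracking the Jacobian of the change of angle $\vphi_1\mapsto\tet_1$; the constant obtained is $2^{n-1}$, and your constant $2$ is incorrect for $n\ge 3$. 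Without this step your argument for this implication is incomplete.

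Conversely, the implication Definition~\ref{d:5L11} $\Rightarrow$ Definition~\ref{d:5L1}, which you call the ``substantial half,'' is handled by the paper in two lines: the geometric inclusion
\[
\{\rho\theta:\|\rho\|\le\del\}\subset\Big\{\tfrac{\theta+2\del\xi}{|\theta+2\del\xi|}:\xi\in\S^{n-1}\Big\},
\]
established in the proof of Proposition~\ref{l:3ker} as inequality \eqref{e:3om}, gives $\om_1(\del)\le\tilde\om_1(2\del)$ directly --- a rotation of size $\del$ is realised pointwise as a single translation of size $2\del$ followed by radial projection, so no averaging is needed. Your double-modulus/mollification route does work (both the comparison $\om_1\lesssim\mathcal D$ via local averages and the comparison $\mathcal D\lesssim\tilde\om_1$ via the change of variables $h=tz'-x'$ check out), so this direction is not wrong, only considerably more elaborate than necessary.
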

\begin{proof}
By Definition \ref{d:5L1} and Definition \ref{d:5L11}, it is enough to show that for $\Om\in L^1(\S^{n-1})$, the following condition (a) and (b)
are equivalent:

(a)\  $\int_0^1\fr{\om_1(\del)}{\del}d\si(\del)<\infty$, where $\om_1(\del)=\sup\limits_{\|\rho\|\leq\del}\int_{\S^{n-1}}|\Om(\rho x')-\Om(x')|d\si(x')$,

(b)\ $\int_0^1\fr{\tilde{\om}_1(\del)}{\del}d\si(\del)<\infty$, where $\tilde{\om}_1(\del)=\sup\limits_{|h|\leq\del}\int_{\S^{n-1}}|\Om(x'+h)-\Om(x')|d\si(x')$.

We first show that (b) implies (a). By \eqref{e:3om} (note that the constant $C=2$), we have
$$\om_1(\del)\leq\sup\limits_{|\xi|=1}\int_{\S^{n-1}}|\Om(\tet+2\del\xi)-
\Om(\tet)|d\si(\tet)\leq\tilde{\om}_1(2\del).$$
Hence we have
\begin{equation*}
\begin{split}
\int_0^1\fr{\om_1(\del)}{\del}d\del&=\Big(\int_0^{1/2}+\int_{1/2}^1\Big)
\fr{\om_1(\del)}{\del}d\del\leq\int_0^{1/2}\fr{\tilde{\om}_1(2\del)}{\del}
d\del+\int_{1/2}^1\fr{\om_1(\del)}{\del}d\del\\
&\leq\int_0^1\fr{\tilde{\om}_1(\del)}{\del}d\del+C\|\Om\|_1.
\end{split}
\end{equation*}

Now we turn to the other part: (a) implies (b). By Lemma \ref{l:3omesti}, there exists a constant $0<a_0<1$ such that for any $0<|h|\leq a_0$,
we have
$$\int_{\S^{n-1}}|\Om(\xi+h)-\Om(\xi)|d\si(\xi)\leq C\sup_{\|\rho\|\leq|h|}\int_{\S^{n-1}}|\Om(\rho\tet)-\Om(\rho)|d\si(\tet).$$
If $0<\del<a_0$, then
\begin{equation*}
\begin{split}
 \tilde{\om}_1(\del)&=\sup\limits_{|h|\leq\del}\int_{\S^{n-1}}
 |\Om(\xi+h)-\Om(\xi)|d\si(\xi)\\
&\leq C\sup\limits_{|h|\leq\del}\sup_{\|\rho\|\leq|h|}\int_{\S^{n-1}}
|\Om(\rho\tet)-\Om(\tet)|d\si(\tet)\leq C \om_1(\del).
\end{split}
\end{equation*}
If $a_0\leq\del<1$, we get
$$\tilde{\om}_1(\del)=\sup\limits_{|h|\leq\del}\int_{\S^{n-1}}|\Om(\tet+h)-\Om(\tet)|d\si(\tet)
\leq\|\Om\|_1+\sup\limits_{|h|\leq\del}\int_{\S^{n-1}}|\Om(\tet+h)|d\si(\tet).$$
If we can prove that
\Be\label{e:5Omb}
\sup\limits_{|h|\leq\del}\int_{\S^{n-1}}|\Om(\tet+h)|d\si(\tet)\leq C\|\Om\|_1,
\Ee
then we have
\begin{equation*}
\begin{split}
 \int_0^1\fr{\tilde{\om}_1(\del)}{\del}d\del&=\Big(\int_0^{a_0}+\int_{a_0}^1\Big)
\fr{\tilde{\om}_1(\del)}{\del}d\del
\leq C\int_0^{1}\fr{\om_1(\del)}{\del}d\del+\int_{a_0}^1
\fr{\tilde{\om}_1(\del)}{\del}d\del\\
&\leq C\int_0^{1}\fr{\om_1(\del)}{\del}d\del+\int_{a_0}^1
\frac 1\delta\bigg(\|\Om\|_1+\sup\limits_{|h|\leq\del}
\int_{\S^{n-1}}|\Om(\tet+h)|d\si(\tet)\bigg)d\del\\
&\leq C\int_0^1\fr{{\om}_1(\del)}{\del}d\del+C\|\Om\|_1.
\end{split}
\end{equation*}
Hence, to complete the proof of Theorem \ref{t:5equ}, it remains to verify  \eqref{e:5Omb}.
By rotation, we may assume that $h=(h_1,0,\cdots,0)$, where $0<h_1<1$. By using the spherical coordinate formula on $\S^{n-1}$(see Appendix D in \cite{L}), we can write
\Be\label{e:3omb}
\begin{split}
\int_{\S^{n-1}}\Big|\Om\Big(\fr{x+h}{|x+h|}\Big)\Big|d\si(x)&=\int_{\vphi_1=0}^{\pi}\cdots\int_{\vphi_{n-2}=0}^{\pi}\int_{\vphi_{n-1}=0}^{2\pi}
\Big|\Om\Big(\fr{x(\vphi)+h}{|x(\vphi)+h|}\Big)\Big|\\
&\ \ \ \ \times|J(n,\vphi)|d\vphi_{n-1}\cdots d\vphi_1,
\end{split}
\Ee
where $x(\vphi)$ and $J(n,\vphi)$ are defined as
\Bes
\begin{split}
x_1&=\cos\vphi_1,\\
x_2&=\sin\vphi_1\cos\vphi_2,\\
x_3&=\sin\vphi_1\sin\vphi_2\cos\vphi_3,\\
&\cdots\\
x_{n-1}&=\sin\vphi_1\sin\vphi_2\cdots\sin\vphi_{n-2}\cos\vphi_{n-1},\\
x_n&=\sin\vphi_1\sin\vphi_2\cdots\sin\vphi_{n-2}\sin\vphi_{n-1};
\end{split}
\Ees
$$J(n,\vphi)=(\sin\vphi_1)^{n-2}\cdots(\sin\vphi_{n-3})^2\sin\vphi_{n-2}.$$
Compared with $x(\vphi)$, $\fr{x(\vphi)+h}{|x(\vphi)+h|}$ can be written as $x(\tet)$ with $\tet_i=\vphi_i, 2\leq i\leq n-1$. This can be seen from the point of geometry since $h=(h_1,0,\cdots,0)$. Hence we make a variable transform that maps $(\vphi_1,\vphi_2,\cdots,\vphi_{n-1})$ into $(\tet_1,\tet_2,\cdots,\tet_{n-1})$ such that
\Bes
\begin{cases}
\fr{\cos\vphi_1+h_1}{\sqrt{1+2h_1\cos\vphi_1+h_1^2}}&=\cos\tet_1,\ \fr{\sin\vphi_1}{\sqrt{1+2h_1\cos\vphi_1+h_1^2}}=\sin\tet_1,\\
\ \ \ \ \ \ \vphi_2&=\tet_2,\\
&\cdots\\
\ \ \ \ \ \ \vphi_{n-1}&=\tet_{n-1}.
\end{cases}
\Ees
Thus $\fr{x(\vphi)+h}{|x(\vphi)+h|}=x(\tet)$. It is easy to see
$$\tan\tet_1=\fr{\sin\vphi_1}{\cos\vphi_1+h_1}.$$
Then we have
$$d\tet_1=\Big(\arctan\fr{\sin\vphi_1}{\cos\vphi_1+h_1}\Big)'d\vphi_1=\fr{1+h_1\cos\vphi_1}{1+2h_1\cos\vphi_1+h_1^2}d\vphi_1.$$
Note that $0\leq\vphi_1\leq\pi$ and $0<h_1<1$, then $0<\tet_1<\pi$. Therefore the right side of \eqref{e:3omb} is bounded by
\Bes
\begin{split}
&\int_{\tet_1=0}^{\pi}\cdots\int_{\tet_{n-2}=0}^{\pi}\int_{\tet_{n-1}=0}^{2\pi}|\Om(x(\tet))||J(n,\tet)|
\fr{(1+2\cos\vphi_1h_1+h_1^2)^{n/2}}{1+h_1\cos\vphi_1}d\tet_{n-1}\cdots d\tet_1\\
&\leq 2^{n-1}\int_{\tet_1=0}^{\pi}\cdots\int_{\tet_{n-2}=0}^{\pi}\int_{\tet_{n-1}=0}^{2\pi}|\Om(x(\tet))||J(n,\tet)|
d\tet_{n-1}\cdots d\tet_1\\
&=2^{n-1}\int_{\S^{n-1}}|\Om(x)|d\si(x),
\end{split}
\Ees
where in the first inequality we use
$$\fr{1+2h_1\cos\vphi_1+h_1^2}{1+h_1\cos\vphi_1}\leq2$$
and $0<h_1<1$. Therefore we finish the proof of \eqref{e:5Omb}.
\end{proof}
\begin{remark} By Theorem \ref{t:5equ}, when applying the $L^1$-Dini condition, one may use its definition in Definition \ref{d:5L1} or Definition \ref{d:5L11} according to the request of application.
\end{remark}

The $L^r_{\alp}$-Dini condition that we introduce in Definition \ref{d:5Lalp} is defined by rotation. It is natural to consider the translation version.
\begin{definition}\label{d:5Lalp2}
Let $\Om$ satisfy \eqref{e:Mo1}, $1\le s\le\infty$ and $0<\alp<n$. We say that $\Om$ satisfies the $L^{s}_{\alp}$-Dini condition if

(i)\ $\Om\in L^{s}(\S^{n-1})$;

(ii)\ $\int_0^1\fr{\tilde{\om}_1(\del)}{\del^{1+\alp}}d\del<\infty$, where
$\tilde{\om}_1$ is defined by \eqref{defin2}.
\end{definition}

By using the similar way that we prove Theorem \ref{t:5equ}, we have
the following result.
\begin{theorem}\label{t:5equ2}
Let $s\geq1$ and $0<\alp<n$. $L^s_{\alp}$-Dini conditions defined respectively in Definition \ref{d:5Lalp} and Definition \ref{d:5Lalp2} are equivalent.
\end{theorem}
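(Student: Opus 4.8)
The plan is to mimic the proof of Theorem \ref{t:5equ} essentially verbatim, simply replacing the weight $1/\del$ by the weight $1/\del^{1+\alp}$ throughout and tracking how each step survives the change. First I would record the analogue of the inclusion \eqref{e:3om}: since the geometric argument there (showing $B(2)\supset A$, i.e.\ $\{\rho\tet:\|\rho\|\le\del\}\subset\{\frac{\tet+2\del\xi}{|\tet+2\del\xi|}:\xi\in\S^{n-1}\}$ for $0<\del<\tfrac12$) is independent of any $L^p$ or $\alp$ considerations, it yields at once $\om_1(\del)\le\tilde\om_1(2\del)$. This gives the direction ``Definition \ref{d:5Lalp2} $\Rightarrow$ Definition \ref{d:5Lalp}'': split $\int_0^1\frac{\om_1(\del)}{\del^{1+\alp}}d\del=\int_0^{1/2}+\int_{1/2}^1$; on $(0,1/2)$ bound $\om_1(\del)\le\tilde\om_1(2\del)$ and change variables $\del\mapsto 2\del$, which introduces only a harmless constant $2^{1+\alp}$; on $(1/2,1)$ use $\om_1(\del)\le 2\|\Om\|_1$ and $\int_{1/2}^1\del^{-1-\alp}d\del<\infty$. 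The condition $\Om\in L^s(\S^{n-1})$ is identical in both definitions, so nothing is needed there.

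For the reverse direction ``Definition \ref{d:5Lalp} $\Rightarrow$ Definition \ref{d:5Lalp2}'' I would again invoke Lemma \ref{l:3omesti}: for $0<\del<a_0$ (with $0<a_0<1$ the constant from that lemma) one gets $\tilde\om_1(\del)\le C\,\om_1(\del)$ exactly as in the proof of Theorem \ref{t:5equ}, so $\int_0^{a_0}\frac{\tilde\om_1(\del)}{\del^{1+\alp}}d\del\le C\int_0^{a_0}\frac{\om_1(\del)}{\del^{1+\alp}}d\del<\infty$. For $a_0\le\del<1$ one has the elementary bound $\tilde\om_1(\del)\le\|\Om\|_1+\sup_{|h|\le\del}\int_{\S^{n-1}}|\Om(\tet+h)|\,d\si(\tet)$, and the key estimate \eqref{e:5Omb}, namely $\sup_{|h|\le\del}\int_{\S^{n-1}}|\Om(\tet+h)|\,d\si(\tet)\le C\|\Om\|_1$, is proved in Theorem \ref{t:5equ} by the spherical-coordinate change of variables and is \emph{independent of $\alp$}; hence $\tilde\om_1(\del)\le C\|\Om\|_1$ on $[a_0,1)$, and since $\int_{a_0}^1\del^{-1-\alp}d\del<\infty$ this tail contributes only $C\|\Om\|_1$. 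Combining the two ranges closes the argument.

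The only place where the exponent $\alp$ enters non-trivially is the requirement that the ``leftover'' integrals $\int_{1/2}^1\del^{-1-\alp}\,\om_1(\del)\,d\del$ and $\int_{a_0}^1\del^{-1-\alp}\,\tilde\om_1(\del)\,d\del$ be finite; but on those intervals $\del$ is bounded away from $0$, so $\del^{-1-\alp}$ is bounded and both integrals are dominated by $C\|\Om\|_1$ just as before. In other words, replacing $1/\del$ by $1/\del^{1+\alp}$ changes nothing of substance. I do not anticipate a genuine obstacle here; the mild point to be careful about is simply the change of variables $\del\mapsto 2\del$ in the first direction, where one must verify $2^{1+\alp}\int_0^1\frac{\tilde\om_1(\del)}{\del^{1+\alp}}d\del$ is the resulting bound rather than something worse — and it is, since $\del\le 1/2$ keeps $2\del\le 1$ inside the range of integration of the hypothesis. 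Hence Theorem \ref{t:5equ2} follows by the same scheme as Theorem \ref{t:5equ}.
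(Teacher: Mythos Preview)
Your proposal is correct and follows exactly the scheme the paper intends: the paper itself gives no explicit proof of Theorem~\ref{t:5equ2} beyond the remark that it is obtained ``by using the similar way that we prove Theorem~\ref{t:5equ},'' and your write-up simply fills in those details, checking (accurately) that replacing $\del^{-1}$ by $\del^{-1-\alp}$ affects only the tail integrals over $[1/2,1)$ and $[a_0,1)$, where $\del^{-1-\alp}$ is bounded.
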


\section{Proof of Theorem \ref{t:main}}\label{5sec:main}

In this section we give the proof of Theorem \ref{t:main}.
Suppose $\mu$ is a signed measure on $\R^n$. For $t>0,$ let $\mu_t(E)=\mu(\fr{E}{t})$,
where $E$ is the Lebesgue measurable set in $\R^n$.

\subsection{Some elementary facts}
Let us begin with some elementary facts.

\begin{lemma}\label{e:5mea}
Let $\mu$ be a signed measure on $\R^n$. Suppose $E$ is the $\mu_t$ measurable set. Then
$$|\mu_t|(E)=|\mu|_t(E).$$
\end{lemma}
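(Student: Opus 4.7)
The plan is to argue directly from the variational definition of total variation. Recall that for a signed measure $\nu$ on $\R^n$ and any $\nu$-measurable set $F$,
\[
|\nu|(F)=\sup\Big\{\sum_{i=1}^N |\nu(F_i)|:\ \{F_i\}_{i=1}^N \text{ a finite measurable partition of } F\Big\}.
\]
The central observation is that dilation by $t$ sets up a bijection between measurable partitions of $E$ and measurable partitions of $E/t$: $\{E_i\}_{i=1}^N$ partitions $E$ if and only if $\{E_i/t\}_{i=1}^N$ partitions $E/t$. Once this is noted, the computation is a one-liner, since by the definition $\mu_t(E_i)=\mu(E_i/t)$.

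Carrying this out, I would write
\[
|\mu_t|(E)=\sup\sum_{i=1}^N|\mu_t(E_i)|=\sup\sum_{i=1}^N |\mu(E_i/t)|,
\]
where the suprema are taken over finite measurable partitions $\{E_i\}$ of $E$. Reindexing by $F_i:=E_i/t$ turns this supremum into the supremum over finite measurable partitions $\{F_i\}$ of $E/t$, giving $|\mu|(E/t)$, which by definition of $|\mu|_t$ equals $|\mu|_t(E)$.

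I expect no real obstacle here; the only thing to be slightly careful about is a measurability check, namely that the dilation map $x\mapsto tx$ is a measurable bijection of $\R^n$ sending Lebesgue (hence $\mu$-) measurable sets to Lebesgue measurable sets, and in particular preserves the class of measurable partitions. Alternatively, one could invoke the Hahn decomposition: let $\R^n=P\cup N$ be a Hahn decomposition for $\mu$, then $tP\cup tN$ is easily checked to be a Hahn decomposition for $\mu_t$, from which $\mu_t^{\pm}(E)=\mu^{\pm}(E/t)$ and the claim follows by adding. Either route yields the stated identity, and I would present the variational one since it is the shorter of the two.
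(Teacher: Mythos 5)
Your primary argument via the variational (partition-supremum) definition of total variation is correct and genuinely different from the paper's: the paper proves the identity by taking a Hahn decomposition $\R^n=P\cup N$ for $\mu$, observing that $tP$, $tN$ is then a Hahn decomposition for $\mu_t$, and writing $|\mu_t|(E)=\mu_t(E\cap tP)-\mu_t(E\cap tN)=\mu(\tfrac1t E\cap P)-\mu(\tfrac1t E\cap N)=|\mu|(\tfrac1t E)$. That is exactly the alternative you sketch in your closing lines. Both arguments are short and self-contained. Your partition route avoids having to check that the dilated pair is again a Hahn decomposition (easy, but an extra step, and the paper's appeal to ``uniqueness'' of the Hahn decomposition is in fact unnecessary for this --- one only needs that $tP$ is positive and $tN$ is negative for $\mu_t$, which is immediate from $\mu_t(F)=\mu(F/t)$). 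The Hahn route, on the other hand, gives the slightly finer conclusion $(\mu_t)^{\pm}=(\mu^{\pm})_t$ for free. The one point you must make explicit in the variational proof --- and you do flag it --- is that $x\mapsto tx$ is a bimeasurable bijection of $\R^n$, so it carries finite measurable partitions of $E$ bijectively onto finite measurable partitions of $E/t$; with that noted, the two suprema coincide and the identity follows.
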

\begin{proof}
Since $\mu$ is a signed measure on $\R^n$, by the Hahn decomposition (see \cite{f}), there exists a positive set $P$ and a negative set $N$ such that $P\bigcup N=\R^n$ and $P\bigcap N={\emptyset}$. If $P'$ and $N'$ are another such pair, then $P\triangle P'(=N\triangle N')$ is null for $\mu$.
Therefore $\mu^+(E)=\mu(E\cap P)$ and $\mu^-(E)=-\mu(E\cap N)$. Since the Hahn decomposition is unique, the pair $tP$ and $tN$ can be seen as the Hahn decomposition of $\mu_t$.
Then for any $\mu_t$ measurable set $E$, we have
\Bes
\begin{split}
|\mu_t|(E)&=(\mu_t)^+(E)+(\mu_t)^-(E)=\mu_t(E\cap tP)-\mu_t(E\cap tN)\\
&=\mu\Big(\fr{1}{t}E\cap P\Big)-\mu\Big(\fr{1}{t}E\cap N\Big)\\
&=|\mu|\Big(\fr{1}{t}E\Big)=|\mu|_t(E).
\end{split}
\Ees
Hence the proof is completed.
\end{proof}

\begin{lemma}\label{l:3meas}
Let $\mu$ be a nonnegative measure defined on $\R^n$ and $\mu(\R^n)=1$. Suppose $\mu$ is absolutely continuous with respect to Lebesgue measure. Then for any
$0<\eps<1$, there exists $a_\eps$, $0<a_\eps<\infty$, such that $\mu(B(0,a_\eps))=\eps$.
\end{lemma}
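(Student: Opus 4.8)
The statement to prove is Lemma~\ref{l:3meas}: for a nonnegative measure $\mu$ on $\R^n$, absolutely continuous w.r.t. Lebesgue measure with $\mu(\R^n)=1$, and for any $0<\eps<1$, there is a finite $a_\eps>0$ with $\mu(B(0,a_\eps))=\eps$.

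The natural approach is a continuity/intermediate-value argument applied to the function $F(r)=\mu(B(0,r))$ for $r\ge 0$.

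First I would record the basic properties of $F$. Since $B(0,r)$ increases to $\R^n$ as $r\to\infty$ and decreases to $\{0\}$ as $r\to 0^+$, continuity of the measure from below and above gives $\lim_{r\to\infty}F(r)=\mu(\R^n)=1$ and $\lim_{r\to 0^+}F(r)=\mu(\{0\})$. Here absolute continuity is used: $\{0\}$ has Lebesgue measure zero, so $\mu(\{0\})=0$, hence $F(0)=0$ and $F(0^+)=0$. Also $F$ is nondecreasing.

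Second, I would show $F$ is continuous on $[0,\infty)$. Right-continuity follows from continuity of $\mu$ from above (the sets $B(0,r+1/k)$ decrease to $\overline{B(0,r)}$, and the sphere $\partial B(0,r)$ has Lebesgue measure zero, so by absolute continuity $\mu(\overline{B(0,r)})=\mu(B(0,r))=F(r)$; combined with continuity from below at $r$ this gives continuity). Left-continuity at $r>0$ follows from continuity of $\mu$ from below, since $B(0,r-1/k)$ increases to $B(0,r)$. So $F$ is continuous and nondecreasing with $F(0)=0$ and $F(r)\to 1$. Writing $\mu=f\,dx$ with $f\in L^1$, one could alternatively invoke dominated convergence directly: $F(r)=\int_{B(0,r)} f\,dx$ and $\mathbf 1_{B(0,r_k)}f\to \mathbf 1_{B(0,r)}f$ a.e. as $r_k\to r$, dominated by $|f|$; this is perhaps the cleanest route and avoids the sphere-measure remarks.

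Finally, given $0<\eps<1$, since $F$ is continuous with $F(0)=0<\eps$ and $\lim_{r\to\infty}F(r)=1>\eps$, there exists $R$ with $F(R)>\eps$, and by the intermediate value theorem on $[0,R]$ there is $a_\eps\in(0,R)$ with $F(a_\eps)=\eps$. This $a_\eps$ is finite and positive, as required. I do not anticipate a genuine obstacle here; the only point demanding a little care is justifying continuity of $F$ at the endpoint $r=0$ and the role of absolute continuity there (ensuring no atom at the origin), and, if one wants $F$ genuinely continuous rather than merely satisfying the IVT, handling possible jumps — but absolute continuity rules these out, so the argument is routine.
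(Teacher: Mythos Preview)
Your proof is correct and essentially matches the paper's approach: both rest on the continuity of $r\mapsto\mu(B(0,r))$, established via absolute continuity (so that thin annuli and the singleton $\{0\}$ carry zero $\mu$-measure). The only cosmetic difference is that the paper constructs $a_\eps$ explicitly as $\inf\{r:\mu(B(0,r))\geq\eps\}$ and checks continuity at that point by hand, whereas you prove global continuity of $F$ and invoke the Intermediate Value Theorem.
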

\begin{proof}
Since $\mu(\R^n)=1$, there exists $M$, $0<M<\infty$, such that $\mu(B(0,M))\geq\eps$.

Set $A_\eps=\{r:\mu(B(0,r))\geq\eps\}$ and denote $a_\eps=\inf\limits_{r\in A_\eps}r$. It is easy to see that $a_\eps\leq M<\infty$. We claim that $\mu(B(0,a_\eps))=\eps$.
In fact, by the definition of infimum, for any $\alp>0$, there exists a $r\in A_\eps$, which satisfies $a_\eps<r<a_\eps+\alpha$, such that $\mu(B(0,r))\geq\eps$. Hence
$$\mu(B(0,a_\eps))\geq\mu(B(0,r))-\mu(B(0,r)\backslash
B(0,a_\eps))\geq\eps-\mu(B(0,a_\eps+\alp)\backslash B(0,a_\eps)).$$
Note that $m\big(B(0,a_\eps+\alp)\backslash B(0,a_\eps)\big)\rta0$ as $\alp\rta0$. Since
$\mu$ is absolutely continuous with respected to Lebesgue measure, so $\mu(B(0,a_\eps+\alp)\backslash B(0,a_\eps))\rta0$ as $\alp\rta 0$. Hence $\mu(B(0,a_\eps))\geq\eps$.

On the other hand, by the definition of $a_\eps$, for
any $0<r<a_\eps$, we have $\mu(B(0,r))<\eps$. Note
$$\mu(B(0,a_\eps))\leq\mu(B(0,r))+\mu(B(0,a_\eps)\backslash B(0,r))<\eps+\mu(B(0,a_\eps)\backslash B(0,r)).$$
Since $\mu(B(0,a_\eps)\backslash B(0,r))\rta0$ as $r\rta a_\eps$, then $\mu(B(0,a_\eps))\leq \eps$. Therefore we finish the proof.
\end{proof}

\begin{lemma}\label{l:omega_est}
Let $0\leq\alp<n$ and $r=\fr{n}{n-\alp}$. For a fixed $\lambda>0$, we have
\begin{equation}
\lambda^r m\Big(\Big\{x\in\R^n:\frac{|\Omega(x)|}{|x|^{n-\alp}}>\lambda\Big\}\Big)
=\frac{1}{n}\int_{\mathbb{S}^{n-1}}|\Omega(\theta)|^rd\si(\theta).
\end{equation}
\end{lemma}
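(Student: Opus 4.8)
The plan is to compute the Lebesgue measure of the superlevel set
$$E_\lam := \Big\{x\in\R^n:\frac{|\Om(x)|}{|x|^{n-\alp}}>\lam\Big\}$$
directly, by passing to polar coordinates and using that $\Om$ is homogeneous of degree zero (condition \eqref{e:Mo1}).

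First I would write $x=t\theta$ with $t=|x|>0$ and $\theta=x/|x|\in\S^{n-1}$, so that $dx=t^{n-1}\,dt\,d\si(\theta)$ and $\Om(x)=\Om(\theta)$. The inequality defining $E_\lam$ then reads $|\Om(\theta)|>\lam t^{n-\alp}$. Since $0\le\alp<n$, the exponent $n-\alp$ is strictly positive, so for each fixed $\theta$ this is equivalent to $0<t<R(\theta)$, where $R(\theta):=\big(|\Om(\theta)|/\lam\big)^{1/(n-\alp)}$; the slice over $\theta$ is empty precisely when $\Om(\theta)=0$.

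By Tonelli's theorem,
$$m(E_\lam)=\int_{\S^{n-1}}\Big(\int_0^{R(\theta)}t^{n-1}\,dt\Big)\,d\si(\theta)
=\int_{\S^{n-1}}\frac{R(\theta)^n}{n}\,d\si(\theta)
=\frac{1}{n\,\lam^{n/(n-\alp)}}\int_{\S^{n-1}}|\Om(\theta)|^{n/(n-\alp)}\,d\si(\theta).$$
Recalling that $r=\frac{n}{n-\alp}$ and multiplying both sides by $\lam^r$ gives the asserted identity. If $\Om\notin L^r(\S^{n-1})$, both sides equal $+\infty$ and the identity still holds in $[0,\infty]$, so no integrability hypothesis is needed.

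I do not expect a genuine obstacle here: the argument is a routine change of variables. The only points deserving a word of care are that the set $\{\theta:\Om(\theta)=0\}$ contributes nothing to either side, and that the positivity of $n-\alp$ is exactly what allows one to solve $|\Om(\theta)|>\lam t^{n-\alp}$ for $t$; this is why the hypothesis $\alp<n$ (equivalently, $r$ finite) is essential.
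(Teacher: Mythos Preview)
Your proof is correct and follows essentially the same route as the paper: both pass to polar coordinates, use the degree-zero homogeneity of $\Om$ to reduce the condition to $0<t<(|\Om(\theta)|/\lam)^{1/(n-\alp)}$, and integrate radially to obtain $\frac{1}{n\lam^r}\|\Om\|_r^r$. Your additional remarks about the empty slice when $\Om(\theta)=0$ and the case $\Om\notin L^r(\S^{n-1})$ are fine clarifications, but the argument is the same.
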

\begin{proof}
By making a polar transform,
\Bes
\begin{split}
m\Big(\Big\{x\in\R^n:\fr{|\Om(x)|}{|x|^{n-\alp}}>\lam\Big\}\Big)&=\int_{\S^{n-1}}\int_0^\infty
\chi_{\{|\Om(\tet)|/s^{n-\alp}>\lam\}}s^{n-1}dsd\si(\tet)\\
&=\int_{\S^{n-1}}\int_0^{(\fr{|\Om(\tet)|}{\lam})^{\fr{1}{n-\alp}}}s^{n-1}dsd\si(\tet)\\
&=\fr{1}{n\cdot\lam^r}\int_{\S^{n-1}}|\Om(\tet)|^rd\si(\tet).
\end{split}
\Ees
\end{proof}

\begin{lemma}\label{l:3tsup}
Let $\mu$ be a absolutely continuous signed measure on $\R^n$
with respect to Lebesgue measure and $|\mu|(\R^n)<\infty$. Suppose $\Om$ satisfies \eqref{e:Mo1}, \eqref{e:Mo2} and the $L^1$-Dini condition. For any $\lambda>0$, we have
\begin{equation}\label{e:3weak}
\lambda m(\{x\in \mathbb{R}^n:|T_\Omega \mu(x)|>\lambda\})\leq C|\mu|(\R^n)
\end{equation}
where the constant $C$ only depends on $\Om$ and the dimension.
\end{lemma}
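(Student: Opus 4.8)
The plan is to reduce the statement for the signed measure $\mu$ to the classical weak type $(1,1)$ inequality \eqref{e:5WT} for a function, and then to invoke Lemma A to verify its hypothesis.

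First I would pass from $\mu$ to its density. Since $\mu$ is absolutely continuous with respect to Lebesgue measure and $|\mu|(\R^n)<\infty$, the Radon--Nikodym theorem yields $f\in L^1(\R^n)$ with $d\mu=f\,dx$ and $\|f\|_1=|\mu|(\R^n)$; directly from the definition \eqref{singular with mea} this gives $T_\Om\mu(x)=T_\Om f(x)$ for a.e. $x\in\R^n$. (If one prefers to avoid Radon--Nikodym one may instead split $\mu=\mu^+-\mu^-$ via the Hahn decomposition used in Lemma \ref{e:5mea} and treat each part; this is not necessary.) Hence it suffices to prove $\lam\, m(\{x\in\R^n:|T_\Om f(x)|>\lam\})\le C\|f\|_1$ for $f\in L^1(\R^n)$, with $C$ depending only on $\Om$ and $n$.

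Next I would use Lemma A: the $L^1$-Dini condition on $\Om$ forces $\Om\in L\log^+\!\!L(\S^{n-1})$ (and also forces $K(x)=\Om(x)|x|^{-n}$ to satisfy the H\"ormander condition \eqref{Hormand}). Since in addition $\Om$ has mean value zero by \eqref{e:Mo2}, Seeger's weak type $(1,1)$ estimate \eqref{e:5WT} of \cite{S1} applies and gives precisely $m(\{x\in\R^n:|T_\Om f(x)|>\lam\})\le C_1\|f\|_1/\lam$, where $C_1$ depends only on $n$ and on $\Om$ through its $L\log^+\!\!L$ norm. Alternatively, one can bypass \cite{S1} and instead combine the $L^2$ boundedness \eqref{e:5TOM} of $T_\Om$, which holds because $\Om\in L\log^+\!\!L(\S^{n-1})$ (cf. \cite{CZ56}), with the H\"ormander condition supplied by Lemma A, and then run the standard Calder\'on--Zygmund decomposition; this yields the same conclusion with a constant depending only on $\Om$ and $n$.

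Combining the two steps gives $\lam\, m(\{x\in\R^n:|T_\Om\mu(x)|>\lam\})=\lam\, m(\{x\in\R^n:|T_\Om f(x)|>\lam\})\le C\|f\|_1=C|\mu|(\R^n)$, which is \eqref{e:3weak}. There is no serious obstacle in this lemma: the only points needing a little care are that the operator applied to the absolutely continuous measure $\mu$ is literally the operator applied to its density $f$, and that the constant in the weak type bound is independent of $\mu$ — both are immediate. The genuine difficulty of the paper lies elsewhere, namely in upgrading this crude bound to the exact limiting identity \eqref{e:main}.
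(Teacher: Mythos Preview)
Your proposal is correct and follows the same skeleton as the paper: reduce to a function via Radon--Nikodym, so that $T_\Om\mu=T_\Om f$ with $\|f\|_1=|\mu|(\R^n)$, and then invoke a known weak type $(1,1)$ bound for $T_\Om$ acting on $L^1$ functions.

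The only difference is in the second step. The paper simply cites the weak $(1,1)$ bound for $T_\Om$ under the $L^1$-Dini condition from Grafakos' textbook \cite{L}, and records the explicit constant $C\big(\|\Om\|_1+\int_0^1\om_1(s)s^{-1}\,ds\big)$. You instead route through Lemma~A, either feeding $\Om\in L\log^+\!L(\S^{n-1})$ into Seeger's theorem \eqref{e:5WT}, or feeding the H\"ormander condition \eqref{Hormand} into the standard Calder\'on--Zygmund argument. Your second alternative is essentially what the textbook proof does anyway; the Seeger route works too but is heavier machinery than needed here. Either way the outcome is the same, and as you note, the substance of the paper lies in Lemma~\ref{l:mu}, not here.
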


\begin{proof}
Since $\mu$ is a absolutely continuous signed measure on $\R^n$
with respect to Lebesgue measure and $|\mu|(\R^n)<\infty$, by the Radon-Nikodym's theorem (see \cite{f}), there exists a integrable function $f$ such that
$d\mu(x)=f(x)dx.$ Therefore we have
$$T_{\Om}\mu(x)=T_{\Om}f(x).$$
Now the rest of the proof can be found in the book \cite{L}. By carefully examining the proof there, the weak (1,1) bound
in \eqref{e:3weak} is $C(\|\Om\|_1+\int_0^1\fr{{\om}_1(s)}{s}ds)$.
\end{proof}
\subsection{Key lemma}
Now we give a lemma which plays a key role in the proof of Theorem \ref{t:main}.

\begin{lemma}\label{l:mu}
Let $\mu$ be an absolutely continuous signed measure with respect to  Lebesgue measure on $\R^n$ and $|\mu|(\R^n)<+\infty$. Suppose $\Om$ satisfies \eqref{e:Mo1}, \eqref{e:Mo2} and the $L^1$-Dini condition. Let $T_\Omega$ be defined by \eqref{singular with mea}. Then we have
\begin{equation}
\lim\limits_{t\rightarrow0_+}\lambda m(\{x\in \mathbb{R}^n:|T_\Omega \mu_t(x)|>\lambda\})= \frac{1}{n}\|\Omega\|_{1}|\mu(\mathbb{R}^n)|
\end{equation}
for any $\lambda>0$.
\end{lemma}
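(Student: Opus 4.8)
The plan is to exploit that, as $t\to0_+$, the dilated measure $\mu_t$ concentrates at the origin while its total mass stays equal to $\mu(\R^n)$, so that $T_\Om\mu_t(x)$ should converge to the homogeneous profile $\mu(\R^n)\,\Om(x)|x|^{-n}$. Since Lemma~\ref{l:omega_est} with $\alp=0$, $r=1$ gives $m(\{x:|\Om(x)|x|^{-n}|>\lam\})=\fr1{n\lam}\|\Om\|_1$, the quantity we want is precisely the distribution function of that limiting profile at height $\lam$; so the whole task is to upgrade the heuristic convergence to convergence in measure, at which point the level set at height $\lam$ passes to the limit because the limiting distribution function is continuous.

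\emph{Reduction to a nice density.} By Radon--Nikodym write $d\mu=f\,dx$ with $f\in L^1(\R^n)$; then $\mu_t$ has density $g_t(y)=t^{-n}f(y/t)$, so $\|g_t\|_1=\|f\|_1=|\mu|(\R^n)$, $\int g_t=\mu(\R^n)$, and $\int_{|y|>\del}|g_t|\to0$ as $t\to0_+$. Given $\eps>0$, choose $f_0\in C_c(\R^n)$ with $\|f-f_0\|_1<\eps$ and write $g_t=h_t+e_t$, where $h_t,e_t$ are the dilations of $f_0$ and $f-f_0$; then $\|e_t\|_1=\|f-f_0\|_1<\eps$, and by the uniform weak type $(1,1)$ bound of Lemma~\ref{l:3tsup} applied to the density $e_t$ (one may also use Lemma~\ref{e:5mea}) we get $m(\{|T_\Om e_t|>s\})\le Cs^{-1}\eps$ uniformly in $t$. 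A standard sandwich then shows it suffices to prove the Lemma when $f\in C_c(\R^n)$, say with $\supp f\subseteq B(0,R_0)$, so that $\supp g_t\subseteq B(0,R_0 t)$.

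\emph{The error tends to $0$ in measure.} With $f\in C_c$, set $K(x)=\Om(x)|x|^{-n}$, and for $x\notin\supp g_t$ write, using $\int g_t=\mu(\R^n)$,
\[
T_\Om g_t(x)=\mu(\R^n)\,K(x)+R_t(x),\qquad R_t(x)=\int\big[K(x-y)-K(x)\big]g_t(y)\,dy .
\]
I would show $R_t\to0$ in measure on $\R^n$ by splitting $\R^n=\{|x|<\rho\}\cup\{\rho\le|x|\le1\}\cup\{|x|>1\}$ for $0<\rho<1$. On the middle annulus, once $R_0 t<\rho/4$ the integrand only involves $K$ on an annulus bounded away from the origin, where $K\in L^1$, so $\int_{\rho\le|x|\le1}|R_t|\le\|f\|_1\sup_{|y|\le R_0 t}\int_{\rho\le|x|\le1}|K(x-y)-K(x)|\,dx\to0$ by $L^1$-continuity of translation. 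On $\{|x|<\rho\}$ one has trivially $m(\{|R_t|>\eps\}\cap\{|x|<\rho\})\le c_n\rho^n$, uniformly in $t$. On $\{|x|>1\}$, $\int_{|x|>1}|R_t|\le\|f\|_1\sup_{|y|\le R_0 t}\int_{|x|>1}|K(x-y)-K(x)|\,dx\to0$ as $t\to0_+$ by the key estimate discussed below. Combining, $m(\{|R_t|>\eps\})\le c_n\rho^n+\eps^{-1}\int_{|x|\ge\rho}|R_t|$; letting $t\to0$ and then $\rho\to0$ gives $m(\{|R_t|>\eps\})\to0$. Since $\{|K|=c\}$ is Lebesgue-null for each $c>0$, the function $\lam\mapsto m(\{|\mu(\R^n)K|>\lam\})=\fr1{n\lam}|\mu(\R^n)|\,\|\Om\|_1$ is continuous, so the inclusions $\{|\mu(\R^n)K|>\lam+\eps\}\setminus\{|R_t|>\eps\}\subseteq\{|T_\Om g_t|>\lam\}\subseteq\{|\mu(\R^n)K|>\lam-\eps\}\cup\{|R_t|>\eps\}$ together with $R_t\to0$ in measure yield $\lam\, m(\{|T_\Om g_t|>\lam\})\to\fr1n|\mu(\R^n)|\,\|\Om\|_1$, as required.

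\textbf{The main obstacle} is the exterior estimate $\int_{|x|>1}|K(x-y)-K(x)|\,dx\to0$ as $|y|\to0$: there $K$, and hence $T_\Om g_t$, is not integrable, so the $L^1$-translation argument fails, and $\Om\in L^1(\S^{n-1})$ alone does not suffice — this is exactly where the $L^1$-Dini condition must be used (it is a ``small on far regions'' sharpening of the Hörmander property of Lemma~A). I would prove it by writing $\{|x|>1\}=\bigcup_{k\ge0}\{2^k\le|x|<2^{k+1}\}$, rescaling each shell to the fixed annulus $\{1\le|x|\le2\}$ via $K(\lam x)=\lam^{-n}K(x)$ (so the shift becomes $y/2^k$), estimating the fixed-annulus integral by $C\big(|y|2^{-k}\|\Om\|_1+\tilde\om_1(C|y|2^{-k})\big)$, and summing in $k$ to obtain $\lesssim\|\Om\|_1|y|+\tilde\om_1(C|y|)+\int_0^{C|y|}\tilde\om_1(s)s^{-1}\,ds$, all of which vanish as $|y|\to0$ precisely because of Dini integrability; here Theorem~\ref{t:5equ} is invoked to pass between the rotation modulus $\om_1$ of Definition~\ref{d:5L1} and the translation modulus $\tilde\om_1$ that appears after the shift. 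The only remaining care is to order the iterated limits $t\to0$, $\rho\to0$, $\eps\to0$ (and the approximation limit in the reduction step) correctly.
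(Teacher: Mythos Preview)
Your argument is correct and follows the same strategy as the paper: compare $T_\Om\mu_t$ to the profile $\mu(\R^n)\,\Om(x)|x|^{-n}$, absorb tails by the weak $(1,1)$ bound (Lemma~\ref{l:3tsup}), control the kernel difference on the exterior via the Dini integral of $\tilde\om_1$ through Theorem~\ref{t:5equ}, and finish with Lemma~\ref{l:omega_est}. The only difference is packaging: you first reduce to a $C_c$ density and then show the remainder $R_t\to0$ in measure, whereas the paper keeps the general measure, truncates $\mu_t$ to a shrinking ball via Lemma~\ref{l:3meas}, and bounds an exceptional set $G_t$ directly by Chebyshev --- the underlying estimates (in particular your dyadic-shell exterior bound) coincide with the paper's polar-coordinate computation of $m(G_{t,1})$ and $m(G_{t,2})$.
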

\begin{proof}
Without loss of generality,  we may assume $|\mu|(\R^n)=1$. Let $\delta$ is
small enough such that $0<\delta\ll1$.  For any fixed $\lambda>0$, choose $\eps$ such that $0<\eps\leq\frac{1}{2}\delta\lambda$. By Lemma \ref{l:3meas}, there exists an $a_\eps$ with $0<a_\eps<\infty$, such that $|\mu|(B(0,a_\eps))=1-\eps$.
Set $\eps_t=a_\eps\cdot t$, by Lemma \ref{e:5mea} we have $$|\mu_t|(B(0,\eps_t))=|\mu|_t(B(0,\eps_t))=1-\eps.$$
Let $\eta>\eps_t$. For $x\in B(0,\eta)^c$ and $y\in B(0,\eps_t)$, we can choose the minimal positive constant $\tau$ which satisfies
\Be\label{e:5xy}
\fr{1-\tau}{|x|^n}\leq\fr{1}{|x-y|^n}\leq\fr{1+\tau}{|x|^n}.
\Ee
Then $\tau\rta0_+$ as $t\rta0_+$.

Define $d\mu_t^1(x)=\chi_{B(0, \eps_t)}(x)d\mu_t(x)$ and $d\mu_t^2(x)=\chi_{B(0, \eps_t)^c}(x)d\mu_t(x)$, where $\chi_E$ is the
characteristic function of $E$. Hence we have
\[|T_\Om\mu^1_t(x)|-|T_\Om\mu^2_t(x)|\leq|T_\Om\mu_t(x)|\leq |T_\Om\mu_t^1(x)|+|T_\Om\mu_t^2(x)|.\]
For any given $\lambda>0$, let
\[F_\lam^t=\{x\in\R^n:|T_\Om\mu_t(x)|>\lam\},\]
\[F_{1, \lam}^t=\{x\in\R^n:|T_\Om\mu_t^1(x)|>\lam\}\]
and
\[F_{2, \lam}^t=\{x\in\R^n:|T_\Om\mu_t^2(x)|>\lam\}.\]

Since $\Om$ satisfies the $L^1$-Dini condition, by Lemma \ref{l:3tsup}, $T_\Om$ is of weak
type (1,1). Therefore
\begin{equation}\label{e:5F2}
\begin{split}
m(F_{2, \del\lam}^t)&=m(\{x\in\R^n:|T_\Om\mu_t^2(x)|>\del\lam\})\leq \frac{C}{\del\lam}|\mu_t^2|(\R^n)\\
&=\frac{C}{\del\lam}|\mu_t|(B(0, \eps_t)^c)\leq\frac{C\eps}{\del\lam}.
\end{split}
\end{equation}

Since $F_{1, (1+\del)\lam}^t\subset F_{2, \del\lam}^t\cup F_\lam^t$ and $F_\lam^t\subset F_{2, \del\lam}^t\cup
F^t_{1, (1-\del)\lam}$, by \eqref{e:5F2} we have the following estimate
\Be\label{e:del}
-\frac{C\eps}{\del\lam}+m(F_{1, (1+\del)\lam}^t)\leq m(F_\lam^t)\leq
 \frac{C\eps}{\del\lam}+m(F_{1, (1-\del)\lam}^t).
\Ee
By the choice of $\eps$ and $\del$, $m(F_{1, (1+\del)\lam}^t)$ and $m(F_{1, (1-\del)\lam}^t)$ may approximate to $m(F_{\lam}^t)$ as $t\rta 0_+$ by \eqref{e:del}.
It is easy to see that
\[m(F_{1, (1+\del)\lam}^t)-\om_n\eta^n\leq m(F_{1, (1+\del)\lam}^t\cap B(0, \eta)^c)\leq m(F_{1, (1+\del)\lam}^t)\]
where $\om_n$ is the Lebesgue measure of unit ball in $\R^{n}$. Therefore we conclude that $m(F_{1, (1+\del)\lam}^t\cap B(0, \eta)^c)$ approximates to
$m(F_{1, (1+\del)\lam}^t)$ as $\eta\rta 0_+$.
Similarly, $m(F_{1, (1-\del)\lam}^t\cap B(0, \eta)^c)$ approximates to
$m(F_{1, (1-\del)\lam}^t)$ as $\eta\rta 0_+$.

Now we split $T_{\Om}\mu_t^1(x)$ into two parts:
$$T_{\Om}\mu_t^1(x)=\lim_{\eps'\rta0_+}\int_{|x-y|>\eps'}\fr{\Om(x)}{|x|^n}d\mu_t^1(y)+
\lim_{\eps'\rta0_+}\int_{|x-y|>\eps'}\Big(\fr{\Om(x-y)}{|x-y|^n}-\fr{\Om(x)}{|x|^n}\Big)d\mu_t^1(y).$$
Using the triangle inequality, we have
\begin{equation}\label{e:M_Om_de}
\begin{split}
&\ \ \ \ \bigg|\int_{|x-y|>\eps'}\fr{\Om(x)}{|x|^n}d\mu_t^1(y)\bigg|-
\int_{|x-y|>\eps'}\Big|\fr{\Om(x-y)}{|x-y|^n}-\fr{\Om(x)}{|x|^n}\Big|d|\mu_t^1|(y)\\
&\leq\Big|\int_{|x-y|>\eps'}\fr{\Om(x-y)}{|x-y|^n}d\mu_t^1(y)\Big|\\
&\leq\Big|\int_{|x-y|>\eps'}\fr{\Om(x)}{|x|^n}d\mu_t^1(y)\Big|+
\int_{|x-y|>\eps'}\Big|\fr{\Om(x-y)}{|x-y|^n}-\fr{\Om(x)}{|x|^n}\Big|d|\mu_t^1|(y).
\end{split}
\end{equation}
Denote
\[G_t:=\bigg\{x\in B(0, \eta)^c:\lim\limits_{\eps'\rta 0_+}\int_{|x-y|>\eps'}\Big|\fr{\Om(x)}{|x|^n}-\fr{\Om(x-y)}
{|x-y|^n}\Big|d|\mu^1_t|(y)\geq2\del\lam\bigg\}. \]
Since
$$\bigg|\fr{\Om(x-y)}{|x-y|^n}-\fr{\Om(x)}{|x|^n}\bigg|\leq
\fr{|\Om(x-y)-\Om(x)|}{|x-y|^n}+
|\Om(x)|\Big|\fr{1}{|x-y|^n}-\fr{1}{|x|^n}\Big|,$$
we have
$$G_t\subset G_{t,1}\cap G_{t,2},$$
where
$$G_{t,1}:=\bigg\{x\in B(0, \eta)^c:\lim\limits_{\eps'\rta 0_+}
\int_{|x-y|>\eps'}\fr{|\Om(x-y)-\Om(x)|}{|x-y|^n}d|\mu^1_t|(y)
\geq\del\lam\bigg\}$$
and
$$G_{t,2}:=\bigg\{x\in B(0, \eta)^c:\lim\limits_{\eps'\rta 0_+}
\int_{|x-y|>\eps'}|\Om(x)|\Big|\fr{1}{|x-y|^n}-\fr{1}{|x|^n}\Big|d|\mu^1_t|(y)
\geq\del\lam\bigg\}.$$

Consider $G_{t,1}$ firstly.
If $x\in B(0,\eta)^c$ and $y\in B(0,\eps_t)$, then $|x|>|y|$ and
$\fr{1}{|x-y|^n}\leq\fr{1+\tau}{|x|^n}$ by \eqref{e:5xy}. Using Chebychev's inequality,
Fubini's theorem and making a polar transform, we have
\Bes
\begin{split}
m(G_{t,1})&\leq m\bigg(\bigg\{x\in B(0, \eta)^c:\int_{\R^n}\fr{|\Om(x)-\Om(x-y)|}{|x|^n}d|\mu_t^1|(y)
\geq\fr{\del\lam}{1+\tau}\bigg\}\bigg)\\
&\leq\fr{1+\tau}{\lam\del}\int_{B(0, \eta)^c}\int_{\R^n}\frac{|\Om(x-y)-\Om(x)|}{|x|^n}d|\mu^1_t|(y)dx\\
&=\fr{1+\tau}{\lam\del}\int_{\R^n}\int_{B(0, \eta)^c}\fr{|\Om(x-y)-\Om(x)|}{|x|^n}dxd|\mu_t^1|(y)\\
&=\frac{1+\tau}{\lam\del}\int_{\R^n}\int_{\eta}^{+\infty}\int_{\mathbb{S}^{n-1}}{\Big|\Om(\theta-\frac{y}{r})-\Om(\theta)\Big|}
d\sigma(\theta)\cdot \frac {dr}{r} d|\mu_t^1|(y)
\end{split}
\Ees
By  Theorem \ref{t:5equ}, the $L^1$-Dini condition in Definition \ref{d:5L11} and Definition \ref{d:5L1} are equivalent. So in the following we use the $L^1$-Dini condition in Definition \ref{d:5L11}.
Set $A(r):=\int_0^{r}\fr{\tilde{\om}_1(s)}{s}ds$. Since $\Om$ satisfies the $L^1$-Dini condition, we have $A(r)\rta0$ as $r\rta0_+$. Therefore
\Be
\begin{split}
m(G_{t,1})&\leq \fr{(1+\tau)}{\lam\del}\int_{\R^n}\int_{\eta}^{+\infty}\fr{\tilde{\om}_1(|y|/r)}{ r}drd|\mu_t^1|(y)\\
&=\fr{(1+\tau)}{\lam\del}\int_{\R^n}\int_{0}^{|y|/\eta}\fr{\tilde{\om}_1(s)}{s}dsd|\mu_t^1|(y)\\
&\leq\fr{(1+\tau)}{\del\lam}\int_0^{\eps_t/\eta}\fr{\tilde{\om}_1(s)}{s}ds\int_{\R^n}d|\mu_t^1|(y)\\
&\leq\fr{(1+\tau)}{\del\lam}A(\eps_t/\eta),
\end{split}
\Ee
where in the second equality we make a transform $|y|/r=s$.

Estimate of $m(G_{t,2})$ is similar to that of $m(G_{t,1})$. Again by using Chebychev's inequality,
Fubini's theorem, \eqref{e:5xy} and making a polar transform, we have
\begin{equation}
\begin{split}
m(G_{t,2})&\leq \fr{1}{\del\lam}\int_{B(0,\eta)^c}
\int_{\R^n}|\Om(x)|\Big|\fr{1}{|x|^n}-\fr{1}{|x-y|^n}\Big|d|\mu_t^1|(y)dx\\
&\leq\fr{1}{\del\lam}\int_{\R^n}\int_{B(0,\eta)^c}|\Om(x)|\fr{(1+\tau)n|y|}{|x|^{n+1}}dxd|\mu_t^1|(y)\\
&\leq\fr{(1+\tau)n}{\del\lam}\|\Om\|_1\int_{\R^n}\int_{\eta}^\infty\fr{dr}{r^2}|y|d|\mu_t^1|(y)\\
&\leq\fr{(1+\tau)n\eps_t}{\del\lam\eta}\|\Om\|_1|\mu_t^1|(\R^n)\\
&\leq\fr{(1+\tau)n\eps_t}{\del\lam\eta}\|\Om\|_1,
\end{split}
\end{equation}
where in the fourth inequality we use $d\mu_{t}^1=\chi_{B(0,\eps_t)}d\mu_{t}$.
Therefore combining these estimates for $G_{t,1}$ and $G_{t,2}$, we have
\Be\label{e:G_t}
m(G_t)\leq m(G_{t,1})+m(G_{t,2})\leq\fr{(1+\tau)}{\del\lam}A(\eps_t/\eta)+
\fr{(1+\tau)n\eps_t}{\del\lam\eta}\|\Om\|_1.
\Ee
It is easy to see that
\Bes
\begin{split}
m(\{x\in B(0,\eta)^c\cap G_t^c:&|T_\Om\mu_t^1(x)|>\lam\})
\leq m(\{F_{1,\lam}^t\cap B(0,\eta)^c\})\\
&\leq m(\{x\in B(0,\eta)^c\cap G_t^c: |T_\Om\mu_t^1(x)|>\lam\})+m(G_t).
\end{split}
\Ees
So if $x\in B(0, \eta)^c\cap G_t^c$, by the definition of $G_t$ and \eqref{e:M_Om_de},
$$\fr{|\Om(x)|}{|x|^n}|\mu_t^1(\R^n)|-2\del\lam\leq|T_\Om\mu_t^1(x)|\leq
\fr{|\Om(x)|}{|x|^n}|\mu_t^1(\R^n)|+2\del\lam.$$
Therefore we have
\begin{equation}\label{e:5Tsp}
\begin{split}
 \Big\{&x\in B(0,\eta)^c\cap G_t^c:|T_\Om\mu_t^1(x)|>(1-\del)\lam\Big\}\\
 &\subset\bigg\{x\in B(0,\eta)^c\cap
G_t^c:\fr{|\Om(x)|}{|x|^n}|\mu_t^1(\R^n)|>(1-3\del)\lam\bigg\}
\end{split}
\end{equation}
and
\begin{equation}\label{e:5Tsp1}
\begin{split}
 \Big\{&x\in B(0,\eta)^c\cap G_t^c:\  |T_\Om\mu_t^1(x)|>(1+\del)\lam\Big\}\\
 &\supset\bigg\{x\in B(0,\eta)^c\cap G_t^c:
\fr{|\Om(x)|}{|x|^n}|\mu_t^1(\R^n)|>(1+3\del)\lam\bigg\}.
\end{split}
\end{equation}
By the definition of $\mu_t^1$,
$$|\mu_t^1(\R^n)|=|\mu(\R^n)-\mu_t(B(0,\eps_t)^c)|.$$
Note that $|\mu_t(B(0,\eps_t)^c)|\leq|\mu_t|(B(0,\eps_t)^c)\leq\eps$, so we have
\[|\mu(\R^n)|-\eps<|\mu_t^1(\R^n)|\leq|\mu(\R^n)|+\eps.\]
Using \eqref{e:G_t}, \eqref{e:5Tsp},
\eqref{e:5Tsp1} and Lemma \ref{l:omega_est} with $\alp=0$, we have
\begin{equation}\label{e:5tinf}
\begin{split}
&\ \ \ \ m(F_{1,(1+\del)\lam}^t)\\&\geq m(\{x\in B(0,\eta)^c\cap G_t^c:|T_{\Om}\mu_t^1(x)|>(1+\del)\lam\})\\
&\geq m\Big(\Big\{x\in B(0,\eta)^c\cap G_t^c:\fr{|\Om(x)|}{|x|^n}|\mu_t^1(\R^n)|\geq(1+3\del)\lam\Big\}\Big)\\
&\geq m\Big(\Big\{x\in\R^n:\fr{|\Om(x)|}{|x|^n}|\mu_t^1(\R^n)|>(1+3\del)\lam\Big\}\Big)
-\om_n\eta^n-m(G_t)\\
&\geq\fr{\|\Om\|_1}{n}\cdot\fr{|\mu(\R^n)|-\eps}{(1+3\del)\lam} -\om_n\eta^n-
\fr{(1+\tau)}{\del\lam}A(\fr{\eps_t}{\eta})-\fr{(1+\tau)n\eps_t}{\del\lam\eta}\|\Om\|_1
\end{split}
\end{equation}
and
\begin{equation}\label{e:5tinf1}
\begin{split}
&\ \ \ \ m(F_{1,(1-\del)\lam}^t)\\&\leq m(\{x\in B(0,\tau)^c\cap G_t^c:|T_\Om\mu_t^1(x)|>(1-\del)\lam\})+m(B(0,\eta))+m(G_t)\\
&\leq m\Big(\Big\{x\in\R^n:\fr{|\Om(x)|}{|x|^n}|\mu_t^1(\R^n)|>(1-3\del)\lam\Big\}\Big)+\om_n\eta^n
+m(G_t)\\
&\leq\fr{\|\Om\|_1}{n}\cdot\fr{|\mu(\R^n)|+\eps}{(1-3\del)\lam}+\om_n\eta^n
+\fr{(1+\tau)}{\del\lam}A(\fr{\eps_t}{\eta})+\fr{(1+\tau)n\eps_t}{\del\lam\eta}\|\Om\|_1.
\end{split}
\end{equation}
Here $\om_n$ is the volume of unit ball in $\R^n$. Combining the above estimates \eqref{e:5tinf}, \eqref{e:5tinf1} and \eqref{e:5F2}, we have
\begin{equation*}
\begin{split}
m(F_\lam^t)&\geq m(F_{1,(1+\del)\lam}^t)-m(F_{2,\del\lam}^t)\\
&\geq
\fr{\|\Om\|_1}{n}\fr{|\mu(\R^n)|-\eps}{(1+3\del)\lam}-\om_n\eta^n-
\fr{(1+\tau)}{\del\lam}A(\fr{\eps_t}{\eta})-\fr{(1+\tau)n\eps_t}{\del\lam\eta}\|\Om\|_1-\fr{C\eps}{\del\lam}
\end{split}
\end{equation*}
and
\begin{equation*}
\begin{split}
m(F_\lam^t)&\leq m(F_{1,(1-\del)\lam}^t)+m(F_{2,\del\lam}^t)\\
&\leq\fr{\|\Om\|_1}{n}\fr{|\mu(\R^n)|+\eps}{(1-3\del)\lam}
+\om_n\eta^n+\fr{(1+\tau)}{\del\lam}A(\fr{\eps_t}{\eta})+\fr{(1+\tau)n\eps_t}{\del\lam\eta}\|\Om\|_1+\fr{C\eps}{\del\lam}.
\end{split}
\end{equation*}
Let $t\rta 0_+$, then $\eps_t\rta 0_+$ and $\tau\rta0_+$. So $A(\fr{\eps_t}{\eta})\rta 0_+$. Thus we obtain
\Bes
\liminf_{t\rta0_+}m(F_\lam^t)\geq
\fr{\|\Om\|_1}{n}\fr{|\mu(\R^n)|-\eps}{(1+3\del)\lam}-\om_n\eta^n-\fr{C\eps}{\del\lam}
\Ees
and
\Bes
\limsup_{t\rta0_+}m(F_\lam^t)\leq
\fr{\|\Om\|_1}{n}\fr{|\mu(\R^n)|+\eps}{(1-3\del)\lam}
+\om_n\eta^n+\fr{C\eps}{\del\lam}.
\Ees
Note that $\eps\leq\frac{1}{2}\delta\lam$. Now let $\eps\rta0_+$ firstly and $\del\rta0_+$ secondly. Lastly let $\eta\rta0_+$. Then we have
\Bes
\fr{\|\Om\|_1|\mu(\R^n)|}{n\lam}\leq\liminf_{t\rta0_+}m(F_\lam^t)\leq\limsup_{t\rta0_+}m(F_\lam^t)\leq\fr{\|\Om\|_1|\mu(\R^n)|}{n\lam}.
\Ees
Thus we complete the proof.
\end{proof}

\subsection{The proof of Theorem \ref{t:main}}

We write $T_{\Om}\mu_t(x)$ as
\Be\label{e:M_Om_eq}
\begin{split}
T_{\Om}\mu_t(x)&=\lim_{\ep\rta0_+}\int_{|x-y|>\ep}\fr{\Om(x-y)}{|x-y|^n}d\mu_t(y)\\
&=\fr{1}{t^n}\lim_{\ep\rta0_+}
\int_{|\fr{x-y}{t}|>\ep}\fr{\Om\big(\fr{x}{t}-\fr{y}{t}\big)}
{|\fr{x}{t}-\fr{y}{t}|^n}d\mu\big(\fr{y}{t}\big)=\fr{1}{t^n}T_{\Om}\mu\big(\fr{x}{t}\big).
\end{split}
\Ee
Then by (\ref{e:M_Om_eq}), we have
\Bes
\begin{split}
m(\{x\in\R^n:|T_\Om\mu_t(x)|>\lam\})&=
m\Big(\Big\{x\in\R^n:\fr{1}{t^n}|T_\Om\mu\big(\fr{x}{t}\big)|>\lam\Big\}\Big)\\&=t^nm(\{x\in\R^n:|T_\Om\mu(x)|>\lam t^n\}).
\end{split}
\Ees
Applying Lemma \ref{l:mu}, we get
\Bes
\begin{split}
\lim\limits_{\lam\rta 0_+}\lam m(\{x\in\R^n:|T_\Om\mu(x)|>\lam\})&=\lim\limits_{t\rta 0_+}\lam t^n
m(\{x\in\R^n:|T_\Om\mu(x)|>\lam t^n\})\\
&=\lim\limits_{t\rta 0_+}\lam m(\{x\in\R^n:|T_\Om\mu_t(x)|>\lam\})\\
&=\fr{1}{n}\|\Om\|_{1}|\mu(\R^n)|.
\end{split}
\Ees
Hence we complete the proof of Theorem \ref{t:main}.
$\hfill{} \Box$

\section{Proof of Theorem \ref{t:2}}\label{s:54}
In this section, we give the proof of Theorem \ref{t:2}. The proof is quite similar to that of Theorem \ref{t:main}. So we shall be brief and  only indicate necessary modifications here. We first set up a result for $T_{\Om,\alp}$ which is similar to Lemma \ref{l:mu}.

\begin{lemma}\label{l:mualp}
Set $0<\alp<n$ and $r=\fr{n}{n-\alp}$. Let $\mu$ be an absolutely continuous signed measure with respect to Lebesgue measure on $\R^n$ and $|\mu|(\R^n)<+\infty$. Suppose $\Om$ satisfies \eqref{e:Mo1}, \eqref{e:Mo2} and the $L^r_\alp$-Dini condition. Then we have
\begin{equation}
\lim\limits_{t\rightarrow0_+}\lambda^{r} m(\{x\in \mathbb{R}^n:|T_{\Omega,\alp} \mu_t(x)|>\lambda\})= \frac{1}{n}\|\Omega\|^{r}_{r}|\mu(\mathbb{R}^n)|^r.
\end{equation}
for any $\lambda>0$.
\end{lemma}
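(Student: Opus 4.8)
The plan is to transcribe the proof of Lemma \ref{l:mu} almost verbatim, replacing $|x|^{-n}$ by $|x|^{-(n-\alp)}$, the exponent $1$ by $r=\fr n{n-\alp}$, and $\om_1$ by the $L^r_\alp$-modulus; throughout one may use $\tilde{\om}_1$ in place of $\om_1$ by Theorem \ref{t:5equ2}. First I would normalize $|\mu|(\R^n)=1$, fix $0<\del\ll1$ and $\lam>0$, choose $\eps\in(0,\fr12\del\lam]$, and use Lemma \ref{l:3meas} and Lemma \ref{e:5mea} to produce $\eps_t=a_\eps t$ with $|\mu_t|(B(0,\eps_t))=1-\eps$; then set $d\mu_t^1=\chi_{B(0,\eps_t)}d\mu_t$, $d\mu_t^2=\chi_{B(0,\eps_t)^c}d\mu_t$ and introduce $F_\lam^t$, $F_{1,\lam}^t$, $F_{2,\lam}^t$ as the analogues of the sets in Lemma \ref{l:mu} with $T_{\Om,\alp}$ in place of $T_\Om$.

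The only ingredient not already in the paper is a weak type $(1,r)$ bound for $T_{\Om,\alp}$, and it requires no regularity of $\Om$: since $\Om\in L^r(\S^{n-1})$, Lemma \ref{l:omega_est} gives $|\Om(\cdot)|\,|\cdot|^{-(n-\alp)}\in L^{r,\infty}(\R^n)$, and as $|T_{\Om,\alp}\nu|\le\big(|\Om(\cdot)|\,|\cdot|^{-(n-\alp)}\big)\ast|\nu|$ pointwise, the weak-type Young inequality $L^1\ast L^{r,\infty}\subset L^{r,\infty}$ (valid because $r>1$, so that $L^{r,\infty}(\R^n)$ is normable) gives $\lam^r m(\{|T_{\Om,\alp}\nu|>\lam\})\le C\|\Om\|_r^r|\nu|(\R^n)^r$ for every finite signed measure $\nu$; applied to $\mu_t^2$ this yields $m(F_{2,\del\lam}^t)\le C(\eps/(\del\lam))^r$. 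As in Lemma \ref{l:mu}, the inclusions $F_{1,(1+\del)\lam}^t\subset F_{2,\del\lam}^t\cup F_\lam^t$ and $F_\lam^t\subset F_{2,\del\lam}^t\cup F_{1,(1-\del)\lam}^t$ reduce the problem to estimating $m(F_{1,(1\pm\del)\lam}^t)$, which one may replace by their intersections with $B(0,\eta)^c$ at the cost of $\om_n\eta^n$, fixing $\eta>0$ and then taking $t$ so small that $\eps_t<\eta/2$.

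On $B(0,\eta)^c$ I would split $T_{\Om,\alp}\mu_t^1(x)=\fr{\Om(x)}{|x|^{n-\alp}}\mu_t^1(\R^n)+R_t(x)$, bounding $|R_t(x)|$ by $\int\fr{|\Om(x-y)-\Om(x)|}{|x-y|^{n-\alp}}d|\mu_t^1|(y)$ plus $\int|\Om(x)|\,\big||x-y|^{-(n-\alp)}-|x|^{-(n-\alp)}\big|\,d|\mu_t^1|(y)$, and letting $G_{t,1}$, $G_{t,2}$ be the subsets of $B(0,\eta)^c$ where these two integrals exceed $\del\lam$. For $x\in B(0,\eta)^c$, $y\in B(0,\eps_t)$ one has $\fr{1-\tau}{|x|^{n-\alp}}\le\fr1{|x-y|^{n-\alp}}\le\fr{1+\tau}{|x|^{n-\alp}}$ with $\tau=\tau(t,\eta)\rta0$ as $t\rta0_+$, hence $\big||x-y|^{-(n-\alp)}-|x|^{-(n-\alp)}\big|\le\tau|x|^{-(n-\alp)}$, so $G_{t,2}\subset\{x:|\Om(x)|\,|x|^{-(n-\alp)}\ge\del\lam/\tau\}$ and Lemma \ref{l:omega_est} gives $m(G_{t,2})\le\fr1n\|\Om\|_r^r(\tau/(\del\lam))^r\rta0$. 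Using this $\tau$-bound rather than the slowly decaying $|y|\,|x|^{-(n-\alp)-1}$ of Lemma \ref{l:mu} is what keeps the estimate uniform for all $0<\alp<n$, since the latter ceases to be integrable at infinity once $\alp\ge1$. For $G_{t,1}$, the same $\tau$-estimate combined with Chebyshev's inequality, Fubini's theorem and a polar change of variables reproduces the computation in Lemma \ref{l:mu}, except that the radial integral now generates the factor $\int_0^{|y|/\eta}\tilde{\om}_1(s)s^{-1-\alp}ds$; this gives $m(G_{t,1})\le\fr{1+\tau}{\del\lam}\,\eps_t^{\alp}A_\alp(\eps_t/\eta)$, where $A_\alp(\rho):=\int_0^{\rho}\tilde{\om}_1(s)s^{-1-\alp}ds\rta0$ as $\rho\rta0_+$ by the $L^r_\alp$-Dini condition. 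Finally, on $B(0,\eta)^c\cap(G_{t,1}\cup G_{t,2})^c$ the quantity $|T_{\Om,\alp}\mu_t^1(x)|$ lies within $2\del\lam$ of $|\Om(x)|\,|x|^{-(n-\alp)}|\mu_t^1(\R^n)|$, so its superlevel sets at heights $(1\pm\del)\lam$ are squeezed between those of the latter at heights $(1\mp3\del)\lam$; using Lemma \ref{l:omega_est} (with this $\alp$ and $r$) and $\big||\mu_t^1(\R^n)|-|\mu(\R^n)|\big|\le\eps$, then multiplying by $\lam^r$ and letting $t\rta0_+$ (so $\eps_t,\tau\rta0$ and the $G_{t,i}$-terms vanish), then $\eps\rta0_+$, then $\del\rta0_+$, and finally $\eta\rta0_+$, one would obtain
\Bes
\fr1n\|\Om\|_r^r|\mu(\R^n)|^r\le\liminf_{t\rta0_+}\lam^r m(F_\lam^t)\le\limsup_{t\rta0_+}\lam^r m(F_\lam^t)\le\fr1n\|\Om\|_r^r|\mu(\R^n)|^r.
\Ees

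I do not expect a real obstacle here: the weak $(1,r)$ bound is elementary once one sees it comes straight from Lemma \ref{l:omega_est} and the normability of $L^{r,\infty}$, and the remainder is a faithful copy of Lemma \ref{l:mu}. The only point needing genuine care is bookkeeping — one must check that, with $r$ in place of $1$, every error term remains controlled by a power of $\eps$, $\eta$, $\tau$, or $A_\alp(\eps_t/\eta)$ that vanishes in the stated order of limits, and that the radial-kernel difference is handled by the $\tau$-bound so as not to lose integrability at infinity when $\alp\ge1$.
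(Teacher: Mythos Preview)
Your proposal is correct and follows the same route as the paper, which merely sketches the modifications to Lemma \ref{l:mu}: the same constants and splitting, the $\tau$-bound adapted to the exponent $n-\alp$, Theorem \ref{t:5equ2} for the $G_{t,1}$ estimate, and Lemma \ref{l:omega_est} with $0<\alp<n$ for the final level-set computation. Your write-up is in fact more explicit than the paper's in two places --- you supply a self-contained weak $(1,r)$ bound via Young's inequality for $L^{r,\infty}$ where the paper simply cites \cite{CWW}, and you correctly observe that the $\tau$-bound (rather than the mean-value estimate $|y|\,|x|^{-n-1}$ used for $G_{t,2}$ in Lemma \ref{l:mu}) is what is needed to control $G_{t,2}$ once $\alp\ge1$.
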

\begin{proof}
The proof is similar to that of Lemma \ref{l:mu}. Choose the same constants $\del$, $\eps$, $a_\eps$ and $\eps_t$ as we do in the proof of Lemma \ref{l:mu}.
For the constant $\tau$ we choose the minimal constant such that
$$\fr{1-\tau}{|x|^{n-\alp}}\leq\fr{1}{|x-y|^{n-\alp}}\leq\fr{1+\tau}{|x|^{n-\alp}}.$$
Since $T_{\Om,\alp}$ is bounded from $L^1(\R^n)$ to $L^{\fr{n}{n-\alp},\infty}$ (see Page.224 in \cite{CWW}), we can get the similar estimate in \eqref{e:5F2}.
For the estimate similar to $m(G_{t,1})$, by Theorem \ref{t:5equ2}, we use the equivalent $L^r_{\alp}$-Dini condition in Definition \ref{d:5Lalp2}.
In the estimate similar to \eqref{e:5tinf} and (\ref{e:5tinf}$'$), we can use Lemma \ref{l:omega_est} with $0<\alp<n$. Proceeding the proof as we do in the proof of Lemma \ref{l:mu}, we can finish the proof of Lemma \ref{l:mualp}.
\end{proof}

\noindent
\emph{\textbf{Proof of Theorem \ref{t:2}.}}
As we have done in the last part of  section \ref{5sec:main},
we can establish the following dilation
property of $T_{\Om,\alp}$ which is similar to \eqref{e:M_Om_eq}:
$$T_{\Om,\alp}\mu_t(x)=\fr{1}{t^{n-\alp}}T_{\Om,\alp}\mu(\fr{x}{t}).$$
By using above equality and Lemma \ref{l:mualp}, we have
\begin{equation*}
\begin{split}
&\ \ \ \ \lim\limits_{\lam\rta0_+}\lam^{r}m(\{x\in\R^n:|T_{\Om,\alp}\mu(x)|>\lam\})\\
&=\lim\limits_{t\rta0_+}(\lam t^{n-\alp})^{r}m(\{x\in\R^n:|T_{\Om,\alp}\mu(x)|>\lam t^{n-\alp}\})\\
&=\lim\limits_{t\rta0_+}\lam^{r}m\Big(\Big\{x\in\R^n:|T_{\Om,\alp}\mu(\fr{x}{t})|>\lam t^{n-\alp}\Big\}\Big)\\
&=\lim\limits_{t\rta0_+}\lambda^{r} m(\{x\in \mathbb{R}^n:|T_{\Omega,\alp} \mu_t(x)|>\lambda\})=\frac{1}{n}\|\Omega\|^{r}_{{r}}|\mu(\mathbb{R}^n)|^r.
\end{split}
\end{equation*}
Hence we complete the proof of Theorem \ref{t:2}.
$\hfill{} \Box$
\subsection*{Acknowledgment}
The authors would like to express their deep gratitude to the referee for his/her very careful reading, important comments and valuable suggestions.

\vskip1cm
\bibliographystyle{amsplain}

\end{document}